\newtheorem{thm}{Theorem}[section]
\newtheorem{cor}[thm]{Corollary}
\newtheorem{prop}[thm]{Proposition}
\newtheorem{lem}[thm]{Lemma}
\theoremstyle{definition}
\newtheorem{exmps}[thm]{Examples}
\newtheorem{remark}[thm]{Remark}
\newcommand{\Fl}{\mathrm{Fl}}
\newcommand{\Gr}{\mathrm{Gr}}
\begin{document}

\title[Factorial Schur and Grothendieck polynomials]{Factorial Schur and Grothendieck polynomials from Bott Samelson varieties}
\author{David Oetjen}
\address{
Department of Mathematics,
Virginia Tech University,
Blacksburg, VA 24061
USA}
\email{doetjen@vt.edu}
\maketitle

\begin{abstract}
We use Bott-Samelson resolutions of Schubert varieties in Grassmannians along with equiariant localization techniques to show that the factorial Schur functions and the factorial Grothendieck polynomials represent Schubert classes in equivariant cohomology and equivariant K-theory respectively.
\end{abstract}

\tableofcontents
\section{Introduction}

It is known (e.g. \cite[Chapter 9.4]{Ful:YT}) that the Schur functions represent the Schubert classes in the cohomology ring of the Grassmannians. In equivariant cohomology, it is the factorial Schur functions which represent the equivariant Schubert classes. The first half of this paper aims to reprove this fact with a new method, using a particular type of desingularization of Schubert varieties along with equivariant localization. The second half will follow the same path in equivariant K-theory, showing that the factorial Grothendieck polynomials represent the classes of the equivariant structure sheaves of the Schubert varieties.

\subsection{Statement of results}

Let $\lambda=(\lambda_1,...,\lambda_k)$ be a partition, meaning $\lambda_i\geq\lambda_j$ for $i<j$ and $\lambda_i\in\mathbb{Z}_{\geq0}$ for all $1\leq i\leq k$.
Let $N\in\mathbb{N}$ and $n=N+k$.
Then let $\Omega^\lambda\subseteq\Gr(k,n)$ denote the Schubert variety for $\lambda$, as defined in subsection 2.2.
Let $\pi:\mathbb{V}^\lambda\to\Omega^\lambda$ be the desingulariation as defined in subsection 2.3.
By construction, for a partition $\mu\subseteq\lambda$, we have that $\mathbb{V}^\lambda\subseteq\mathbb{V}^\mu$.
Furthermore, there are line bundles $\{\mathcal{L}_i\}_{1\leq i\leq k}$ on $\mathbb{V}^\lambda$ whose fibers are $S_i/S_{i-1}$, where $S_1\subseteq...\subseteq S_k$ parameterize the points in the partial flag variety $F\ell(1,...,k;n)$.
We then define, for $1\leq i\leq k$, $z_i=c_1^T(\mathcal{L}_i^\vee)$ in $\mathbb{V}^\lambda$ and $x_i$ to be the Chern roots of $\mathcal{S}^\vee$, the dual of the tautological sub-bundle in $\Gr(k,n)$.
In both varieties, $t_i$ is used to denote the equivariant parameters for $1\leq i\leq n$, defined by $t_i=c_1^T(\mathbb{C}_i)$, where $\mathbb{C}_i$ refers to the vector bundle whose fiber is the coordinate axis $\langle e_i\rangle$.
When used in the context of K-theory, $z_i=1-[\mathcal{L}_i]$, $x_i$ are the "K-theoretic Chern roots" which satisfy $\lambda_{-1}(\mathcal{S})=\prod_{i=1}^kx_i$, where \[\lambda_{-1}(V)=\sum_{i\geq0}(-1)^i[\Lambda^iV],\] and $T_i=1-[\mathbb{C}_i^\vee]$.

Factorial Schur functions were first introduced by Biedenharn and Louck in \cite{BL:factSchur}. Macdonald generalized those functions into the form used in this paper, in \cite[6th Variation]{MacD:Schur}:
\[s_\lambda(x,t)=\frac{\text{det}((x_i|t)^{\lambda_j+k-j})}{\prod_{1\leq i<j\leq k}(x_i-x_j)},\]
where $(x_i|t)^r=\prod_{j=1}^r(x_i+t_j)$.
Those factorial Schur functions have been studied extensively (e.g. \cite{GG:factSchur}, \cite{Molev:factSchur}, \cite{MS:factSchur}, and \cite[Chapter I.3]{Mac:SF}).
It is known that the double Schubert polynomials introduced by Lascoux and Sch\"{u}tzenberger \cite{LS:Schubert} represent the equivariant Schubert classes in flag varieties (e.g. \cite{Billey:Kostant}, \cite{Arabia:Kac-Moody}), and that the double Schubert polynomials coincide with the factorial Schur functions for Grassmannian permutations. These together imply that the factorial Schur functions represent the equivariant Schubert classes. See e.g. \cite[Section 5]{Mihalcea.factSchur}.

Factorial Grothendieck polynomials were first introduced by McNamara using set-valued tableaux in \cite{McN:factGrothendieck}, then Ikeda and Naruse formulated them as a determinant in \cite{IN:factGrothendieck}: \[G_\lambda(x,y)=\frac{\text{det}((x_i|T)^{\lambda_j+k-j}(1-x_i)^{j-1})}{\prod_{1\leq i<j\leq k}(x_i-x_j)},\]
where $(x_i|T)^r=\prod_{j=1}^r(x_i+T_j-x_iT_j)$.
The factorial Grothendieck polynomials coincide with the double Grothendieck polynomials for Grassmannian permutations (e.g. \cite[Theorem 8.7]{McN:factGrothendieck}), which also represent the Schubert classes in equivariant K-theory of flag varieties \cite{LS:Grothendieck}. From this one deduces that the factorial Grothendieck polynomials represent the Schubert classes in equivariant K-theory of Grassmannians.

Our aim is to reprove these results using the desingularization $\pi:\mathbb{V}^\lambda\to\Omega^\lambda$. Since $\pi:\mathbb{V}^\lambda\to\Omega^\lambda$ is a desingularization and $\Omega^\lambda$ has only rational singularities, we have that in equivariant cohomology, $\pi_*([\mathbb{V}^\lambda])=[\Omega^\lambda]$, and in equivariant K-theory, $\pi_*([\mathcal{O}_{\mathbb{V}^\lambda}])=[\mathcal{O}_{\Omega^\lambda}]$. This allows us to compute the Schubert class by computing the pushforward of the class of $\mathbb{V}^\lambda$. To do this, the first step is to express the classes, $[\mathbb{V}^\lambda]\in H^*_T(\mathbb{V}^\emptyset)$ in cohomology and $[\mathcal{O}_{\mathbb{V}^\lambda}]\in K_T(\mathbb{V}^\emptyset)$ in K-theory, in terms of the $z$ and $t$ variables ($T$ variables in K-theory), which can be done by taking advantage of the structure of $\mathbb{V}^\emptyset$ as a tower of projective bundles:
\begin{lem}
For a partition $\lambda=(\lambda_1,...,\lambda_k)$, \[[\mathbb{V}^\lambda]=\prod_{i=1}^k\prod_{j=k+1-i}^{k-i+\lambda_i}(z_i+t_j)\ \text{and}\ [\mathcal{O}_{\mathbb{V}^\lambda}]=\prod_{i=1}^k\prod_{j=k+1-i}^{k-i+\lambda_i}(z_i+T_j-x_iT_j).\]
\end{lem}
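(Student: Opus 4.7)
The plan is to prove both formulas simultaneously by induction on $|\lambda|=\sum_i\lambda_i$. The base case $\lambda=\emptyset$ is immediate: each inner product $\prod_{j=k+1-i}^{k-i}$ is empty and so the right-hand side reduces to $1$, matching $[\mathbb{V}^\emptyset]=1$ and $[\mathcal{O}_{\mathbb{V}^\emptyset}]=1$ in their respective theories.

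For the inductive step, I would choose a removable box of $\lambda$: an index $i$ with $\lambda_i>\lambda_{i+1}$ (setting $\lambda_{k+1}:=0$), and let $\lambda'=(\lambda_1,\ldots,\lambda_i-1,\ldots,\lambda_k)$. Since $\lambda'\subset\lambda$, the stated containment gives $\mathbb{V}^\lambda\subseteq\mathbb{V}^{\lambda'}$, and by induction we know $[\mathbb{V}^{\lambda'}]$ and $[\mathcal{O}_{\mathbb{V}^{\lambda'}}]$. The single factor that the inductive step must produce is $(z_i+t_{k-i+\lambda_i})$ in cohomology and $(z_i+T_{k-i+\lambda_i}-z_iT_{k-i+\lambda_i})$ in K-theory, corresponding to the removed box.

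The main obstacle, and the heart of the argument, is to verify that $\mathbb{V}^\lambda$ is a smooth Cartier divisor in $\mathbb{V}^{\lambda'}$ and to identify $\mathcal{O}_{\mathbb{V}^{\lambda'}}(\mathbb{V}^\lambda)$ as $\mathcal{L}_i^\vee\otimes\mathbb{C}_{k-i+\lambda_i}$. Unpacking the Bott--Samelson construction in subsection 2.3, the added incidence condition on passing from $\lambda'$ to $\lambda$ should force the line $\mathcal{L}_i=S_i/S_{i-1}$ into a smaller coordinate subspace, and the natural section cutting out $\mathbb{V}^\lambda$ is the composition $\mathcal{L}_i\hookrightarrow\mathbb{C}^n\twoheadrightarrow\mathbb{C}_{k-i+\lambda_i}$, viewed as a section of $\mathcal{L}_i^\vee\otimes\mathbb{C}_{k-i+\lambda_i}$. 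Its equivariant first Chern class is $z_i+t_{k-i+\lambda_i}$, and its K-theory class is $1-[\mathcal{L}_i\otimes\mathbb{C}_{k-i+\lambda_i}^\vee]=z_i+T_{k-i+\lambda_i}-z_iT_{k-i+\lambda_i}$, which is exactly the required factor.

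Given the line-bundle identification, the cohomology formula follows from the divisor class formula $[\mathbb{V}^\lambda]=c_1^T(\mathcal{L}_i^\vee\otimes\mathbb{C}_{k-i+\lambda_i})\cdot[\mathbb{V}^{\lambda'}]$, and the K-theory formula from the Koszul resolution $[\mathcal{O}_{\mathbb{V}^\lambda}]=(1-[\mathcal{L}_i\otimes\mathbb{C}_{k-i+\lambda_i}^\vee])\cdot[\mathcal{O}_{\mathbb{V}^{\lambda'}}]$, both combined with the inductive hypothesis. To reconcile the $x_i$ appearing in the stated K-theory formula with my $z_i$, I would use that on $\mathbb{V}^\emptyset$ the pulled-back K-theoretic Chern roots of $\mathcal{S}$ are precisely $z_1,\ldots,z_k$: the filtration of $\pi^*\mathcal{S}=S_k$ with quotients $\mathcal{L}_i$ gives $\lambda_{-1}(\pi^*\mathcal{S})=\prod(1-[\mathcal{L}_i])=\prod z_i$, so $x_i=z_i$ in $K_T(\mathbb{V}^\emptyset)$ and the two expressions agree.
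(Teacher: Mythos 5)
Your argument is correct, and it takes a genuinely different route from the paper's. The paper inducts on the level $n$ of the projective-bundle tower, adding an entire row of $\lambda$ at each step: it realizes $\mathbb{V}^\lambda_{n+1}=\mathbb{P}(\mathcal{F}_{N+n+1-\lambda_{n+1}}/\mathcal{S}_n)$ inside $\mathbb{V}^{\lambda'}_{n+1}=\mathbb{P}(\mathcal{F}_{N+n+1}/\mathcal{S}_n)$ as the zero locus of a regular section of the rank-$\lambda_{n+1}$ bundle $(\mathcal{F}_{N+n+1}/\mathcal{F}_{N+n+1-\lambda_{n+1}})\otimes\mathcal{L}_{n+1}^\vee$ (Fulton B.5.6), so each step contributes the whole block $\prod_{j=k-n}^{k-n-1+\lambda_{n+1}}(z_{n+1}+t_j)$ via a top Chern class, resp.\ $\lambda_{-1}$ of the dual via the Koszul resolution of Lemma \ref{sub-bundle}. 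You instead induct on $|\lambda|$, removing one box at a time, so that $\mathbb{V}^\lambda\subset\mathbb{V}^{\lambda'}$ is always a smooth divisor cut out by a section of the line bundle $\mathcal{L}_i^\vee\otimes(\mathcal{F}_{N+i-\lambda_i+1}/\mathcal{F}_{N+i-\lambda_i})=\mathcal{L}_i^\vee\otimes\mathbb{C}_{k-i+\lambda_i}$; your identification of that quotient line and of the section is right (note $S_{i-1}\subseteq F_{N+i-1-\lambda_{i-1}}\subseteq F_{N+i-\lambda_i}$ because $\lambda_{i-1}\geq\lambda_i$, which is why the section descends to $\mathcal{L}_i$), and the condition $\lambda_i>\lambda_{i+1}$ keeps $\lambda'$ a partition so the inductive hypothesis applies. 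Your approach is the rank-one special case of the same B.5.6 mechanism, applied repeatedly: it is more elementary per step (only $c_1$ and the two-term Koszul complex, never a higher-rank top Chern class), and it makes each linear factor correspond to a single box of $\lambda$; the cost is that the divisor sits at an intermediate level $i$ of the tower, so one should note that the later projective-bundle stages of $\mathbb{V}^\lambda$ are exactly the restrictions of those of $\mathbb{V}^{\lambda'}$, whence $\mathbb{V}^\lambda$ is the full preimage of that divisor and the codimension count goes through. The paper's row-at-a-time induction avoids this by always comparing two projectivizations over a common base. Finally, your reading of the $x_i$ in the stated K-theoretic formula as $z_i$ is the right one: the filtration of $\pi^*\mathcal{S}$ by the $S_i$ gives $\lambda_{-1}(\pi^*\mathcal{S})=\prod(1-[\mathcal{L}_i])=\prod z_i$, consistent with the paper's own restatement via $P_\lambda(z,T)=\prod(z_i+T_j-z_iT_j)$.
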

These results are Lemma \ref{dubprod} in cohomology and Lemma \ref{Kpf} in K-theory.
For convenience we define \[p_\lambda(z,t)=\prod_{i=1}^k\prod_{j=k+1-i}^{k-i+\lambda_i}(z_i+t_j)\ \text{and}\ P_\lambda(z,T)=\prod_{i=1}^k\prod_{j=k+1-i}^{k-i+\lambda_i}(z_i+T_j-z_iT_j)\] for any composition (finite sequence of nonnegative integers) $\lambda$.
Then the previous lemma can be restated as: when $\lambda$ is a partition, $[\mathbb{V}^\lambda]=p_\lambda(z,t)$ in equivariant cohomology and $[\mathcal{O}_{\mathbb{V}^\lambda}]=P_\lambda(z,T)$ in equivariant K-theory.
Then by calculating the pushforward of these classes, we obtain the main result:

\begin{thm}
For a composition $\lambda$, we have that \[\pi_*(p_\lambda(z,t))=\frac{\text{det}((x_i|t)^{\lambda_j+k-j})}{\prod_{1\leq i<j\leq k}(x_i-x_j)}\] and that \[\pi_*(P_\lambda(z,T))=\frac{\text{det}((x_i|T)^{\lambda_j+k-j}(1-x_i)^{j-1})}{\prod_{1\leq i<j\leq k}(x_i-x_j)}.\]
\end{thm}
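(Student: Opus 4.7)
My plan is to verify the pushforward formula via equivariant localization on $\Gr(k,n)$, matching both sides at each $T$-fixed point $p_I$ using the Atiyah--Bott--Berline--Vergne (ABBV) formula; the tower-of-projective-bundles structure of $\mathbb{V}^\emptyset$ is used to enumerate the fixed points and read off their tangent weights.

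The fixed points of $\mathbb{V}^\emptyset$ lying over $p_I=p_{\{i_1<\cdots<i_k\}}$ are, in the appropriate convention, the orderings $\vec b=(b_1,\ldots,b_k)$ of $I$ satisfying incidence constraints $b_i\geq k-i+1$ coming from the reference flag used in the tower. At such a point $z_i|_{\vec b}=-t_{b_i}$, and reading off the weights stage by stage yields
\[
e(T_{\vec b}\mathbb{V}^\emptyset)\;=\;\prod_{i=1}^k\prod_{j\in\{k-i+1,\ldots,n\}\setminus\{b_1,\ldots,b_i\}}(t_j-t_{b_i}),
\]
so ABBV expresses $\pi_*(p_\lambda(z,t))|_{p_I}$ as an explicit sum over constrained orderings. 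For the right-hand side, expanding the determinant defining $s_\lambda(x,t)$ by Leibniz and specializing $x_a\mapsto -t_{i_a}$ writes $s_\lambda(x,t)|_{p_I}$ as an antisymmetric sum over \emph{all} orderings of $I$, divided by the Vandermonde $V=\prod_{a<b}(t_{i_b}-t_{i_a})$, of products $\prod_i\prod_{l=1}^{\lambda_i+k-i}(t_l-t_{b_i})$.

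Each such product splits as a head $\prod_{l=1}^{k-i}(t_l-t_{b_i})$ times a tail $\prod_{l=k-i+1}^{k-i+\lambda_i}(t_l-t_{b_i})$; the tail matches the factor of $p_\lambda|_{\vec b}$ in the localization sum, while the head vanishes exactly when $\vec b$ violates $b_i\geq k-i+1$. Hence both sums are supported on the same set of constrained orderings, and the matching reduces to the identity
\[
e(T_{\vec b}\mathbb{V}^\emptyset)\cdot\prod_{i=1}^k\prod_{l=1}^{k-i}(t_l-t_{b_i})\;=\;\prod_{i<l}(t_{b_l}-t_{b_i})\cdot e(T_{p_I}\Gr),
\]
whose left-hand side combines into $\prod_i\prod_{j\in[n]\setminus\{b_1,\ldots,b_i\}}(t_j-t_{b_i})$ and then factors as claimed by separating the indices $j\in I$ from $j\notin I$. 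The Leibniz sign from the determinantal expansion is absorbed into $\prod_{i<l}(t_{b_l}-t_{b_i})=\mathrm{sgn}(\sigma)\cdot V$, where $\sigma$ is the permutation with $b_i=i_{\sigma(i)}$.

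The K-theoretic version follows by the same argument using K-theoretic equivariant localization (with K-theoretic Euler classes in place of ordinary Euler classes) and replacing $(x+t_l)$ by the multiplicative analog $(x+T_l-xT_l)=1-(1-x)(1-T_l)$ throughout; the combinatorial structure of the matching identity is unchanged. The main obstacle is pinning down the reference-flag convention of $\mathbb{V}^\emptyset$ so that the incidence constraint has the form $b_i\geq k-i+1$ used above; once this is fixed, the matching identity is a direct, elementary computation.
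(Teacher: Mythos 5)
Your proposal is correct and takes essentially the same route as the paper: both sides are compared by equivariant localization at the $T$-fixed points of $\Gr(k,n)$, the tower-of-projective-bundles structure of $\mathbb{V}^\emptyset$ is used to read off tangent weights and enumerate the fixed points over a given $p_I$, the constraint $b_i\geq k-i+1$ and the vanishing of the extra "head" factor for invalid orderings play exactly the role they do in the paper's Lemma 3.2, and the final matching identity is the same cancellation the paper performs in Lemma 3.2/3.3 and Theorem 4.1. The only cosmetic difference is that the paper first records the pushforward as a symmetrized polynomial (Lemma 3.3) before recognizing it as a determinant, while you compare localizations of the two sides directly; both rely on injectivity of the localization map to conclude.
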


This is proven by using equivariant localization to calculate the localizations of the pushforward and then using the injectivity of the localization map to reconstruct the class of the pushforward. That process is done in Lemmas \ref{pfloc} and \ref{pfpoly} in equivariant cohomology and Lemmas \ref{Kpfloc} and \ref{Kpfpoly} in equivariant K-theory. Then applying these results to the particular polynomials $p_\lambda(z,t)$ and $P_\lambda(z,T)$ gives the result.
The pushforward for arbitrary classes may also be written as Jacobi-like symmetrizing operators; see the Appendix for details.

If $\lambda$ is a partition, the right sides of the equations in the Theorem are exactly the factorial Schur and factorial Grothendieck polynomials. However, if $\lambda$ is not a partition, there is a straightening rule to express the pushforward of nonpartitions in terms of the pushforward of partitions. This formula may be of interest in its own right.
\begin{thm}
For a composition $\lambda=(\lambda_1,...,\lambda_k)$ and any $1\leq i\leq k-1$, we have \[\pi_*(p_\lambda(z,t))=-\pi_*(p_\mu(z,t))\] in $H^T(\Gr(k,n))$, where $\mu=(\lambda_1,...,\lambda_{i+1}-1,\lambda_i+1,...,\lambda_k)$ and \[\pi_*(P_\lambda(z,T))=\sum_{j=\lambda_i+1}^{\lambda_{i+1}}\frac{1-T_{j+k-i}}{1-T_{\lambda_{i+1}+k-i}}\pi_*(P_{\mu^{(j)}}(z,T))-\sum_{j=\lambda_i+1}^{\lambda_{i+1}-1}\frac{1-T_{j+k-i}}{1-T_{\lambda_{i+1}+k-i}}\pi_*(P_{\nu^{(j)}}(z,T))\] in $K_T(\Gr(k,n))$, where $\mu^{(j)}=(\lambda_1,...,\lambda_{i+1},j,...,\lambda_k)$ and $\nu^{(j)}=(\lambda_1,...,\lambda_{i+1}-1,j,...,\lambda_k)$.
\end{thm}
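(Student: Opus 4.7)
The plan is to use the main pushforward theorem from the introduction to reduce both straightening identities to purely algebraic determinantal identities. In both cases, after applying that theorem, the left-hand side and each summand on the right-hand side are expressed as $\det(\cdot)/V$ with the common Vandermonde $V=\prod_{p<q}(x_p-x_q)$, so equality of pushforwards reduces to equality of the numerator determinants.

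For the cohomology rule, the numerator matrix $M^\lambda$ has $(p,q)$-entry $(x_p|t)^{\lambda_q+k-q}$. A direct calculation shows that the shifted composition $\mu=(\lambda_1,\ldots,\lambda_{i+1}-1,\lambda_i+1,\ldots,\lambda_k)$ is calibrated so that $\mu_i+k-i=\lambda_{i+1}+k-(i+1)$ and $\mu_{i+1}+k-(i+1)=\lambda_i+k-i$. Hence $M^\mu$ is obtained from $M^\lambda$ by swapping columns $i$ and $i+1$, so $\det M^\mu=-\det M^\lambda$, giving the stated identity.

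For the K-theory rule, the naive column-swap antisymmetry is broken by the extra factor $(1-x_p)^{q-1}$ in the $(p,q)$-entry of the K-theoretic matrix. After multiplying through by $1-T_{\lambda_{i+1}+k-i}$ to clear the denominator, the identity becomes an equality among determinants whose columns outside of positions $i$ and $i+1$ all coincide; by multilinearity and alternation of the determinant, it reduces to an identity of wedges of the two affected columns. The key algebraic input is the recurrence
\[(1-T_{q+1})(1-x_p)(x_p|T)^q=(x_p|T)^q-(x_p|T)^{q+1},\]
which is a rearrangement of $(x_p|T)^{q+1}=\bigl(1-(1-x_p)(1-T_{q+1})\bigr)(x_p|T)^q$. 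Apply this recurrence to every occurrence of a column of the form $(x_p|T)^q(1-x_p)^i$ to convert it into a difference of columns carrying the factor $(1-x_p)^{i-1}$. The two sums on the right-hand side then telescope: the $\mu^{(j)}$-sum collapses to two boundary wedge-terms (one of which vanishes by $v\wedge v=0$) and the $\nu^{(j)}$-sum collapses analogously; the two surviving terms agree with the two wedges produced by the same substitution applied to the transformed left-hand side.

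The main obstacle is bookkeeping: one must carefully track the shifts $r=\lambda_i+k-i$ and $s=\lambda_{i+1}+k-i-1$ and verify that the telescoping endpoints reproduce exactly the transformed left-hand side. Once this setup is in place, every cancellation follows automatically from the telescoping identity $\sum_{q=r}^{s}(a_q-a_{q+1})=a_r-a_{s+1}$ together with the alternation of the determinant.
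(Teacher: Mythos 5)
Your proposal is correct and takes essentially the same route as the paper: both reduce the cohomological identity to a column swap in $\det\bigl((x_p|t)^{\lambda_q+k-q}\bigr)$, and both reduce the K-theoretic identity to determinantal manipulations hinging on the same algebraic relation between $(x_p|T)^q$, $(x_p|T)^{q+1}$, and $(1-x_p)$ (the paper writes it as $x_p=(x_p+T_j-x_pT_j)-T_j(1-x_p)$, which after clearing is your $(1-T_{q+1})(1-x_p)(x_p|T)^q=(x_p|T)^q-(x_p|T)^{q+1}$). The only presentational difference is that the paper proves a single-step recursion for $k=2$ and iterates, while you exploit multilinearity and alternation of the determinant to telescope the full sum at once — a slightly cleaner packaging of the same computation.
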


This can be proven purely combinatorially from the determinantal formulae.
A nonequivariant version of the K-theoretic statement was proven by Buch \cite[Lemma 3.2]{Buch:sruleK}, and this equivariant form was proven by Gourbounov and Korff \cite[Corollary 2.4]{GK:Grothendieck}.
The straightening rule for Schur functions is well-known (e.g. \cite[page 40]{Mac:SF}), and the same reasoning naturally extends to factorial Schur functions.
This cohomological straightening rule also appears nonequivariantly in the calculation of the Chern-Schwartz-MacPherson classes using the same Bott-Samelson resolutions in \cite[Lemma 2.6]{aluffi.mihalcea:chern}.
In the related calculation of the Segre-MacPherson classes of matrix Schubert cells, the same straightening rule also appears in \cite[Subsection 6.1]{FR:degenloci}.
In the future we plan to use the tools established in this paper to investigate the Segre-MacPherson classes in equivariant cohomology as well as the motivic Chern classes in equivariant K-theory.

$Acknowledgements.$ I would like to thank Anders Buch and Mark Shimozono for their helpful comments. I would also like to thank my advisor, Leonardo Mihalcea, for his guidance.

\section{Preliminaries}

This section is to review some known results and establish notation for equivariant cohomology, Schubert cells and varieties, and the Bott-Samelson desingularizations of Schubert varieties.

\subsection{Equivariant cohomology}
Let X be a complex algebraic variety with a left $G$-action for $G$ an algebraic group, the $G$-equivariant cohomology of X is given by \[ H^*_G(X)=H^*(\mathbb{E}G\times^GX),\]
where $\mathbb{E}G$ is a contractible space with a free right $G$-action, and $\mathbb{E}G\times^GX=\mathbb{E}G\times X/(e\cdot g,x)\sim(e,g\cdot x)$.
For more information, refer to \cite{Anderson.eqcohom} and the references therein, but for our purposes here I will just list the needed facts.

In our case the group is $T\cong(\mathbb{C}^*)^n$, the group of invertible diagonal $n\times n$ matrices acting on $\mathbb{C}^n$ in the usual way, with this action extending to Grassmannians by acting on the subspaces. The $T$-equivariant cohomology of a point is $H_T^*(pt)\cong\mathbb{Z}[t_1,...,t_n]$, where $t_1,...,t_n$ are the generators of the weight lattice of T, and $H_T^*(X)$ is an $H_T^*(pt)$-algebra for all spaces $X$ mentioned in this paper.

Given a space $X$ and a closed, irreducible subvariety $Y\subseteq X$ invariant under the $T$-action, there is an equivariant fundamental class $[Y]\in H^{2\text{codim}(Y)}_T(X)$ associated to $Y$. Also for any equivariant, proper morphism $f:X\to Y$, there is a pushforward $f_*:H^i_T(X)\to H^{i+\text{dim}(Y)-\text{dim}(X)}_T(Y)$, and for any equivariant morphism, there is a pullback $f^*:H^*_T(Y)\to H^*_T(X)$. In particular for birational morphisms, the pushforward satisfies $\pi_*([Y])=[\pi(Y)]$. Also for any vector bundle $E$ on a space $X$, there is an equivariant total Chern class $c^T(E)\in H^*_T(X)$ which satisfies $\pi^*(c^T(E))=c^T(\pi^*(E))$, where $\pi^*(E)$ is the pullback bundle.

The inclusion map from the set of fixed points $X^T$ into $X$ is equivariant, and so the map induces a pullback map on the equivariant cohomology $\iota^*:H^*_T(X)\to H^*_T(X^T)$. If there are finitely many fixed points, then \[H^*_T(X^T)\cong\oplus_{x\in X^T}H^*_T(x),\] since the points will be disconnected from each other. In smooth varieties with finitely many fixed points and finitely many one-dimensional orbits, this map is injective \cite[Theorem 1.2.2]{GKM}, and so a class can be identified uniquely by its image under this map.
For each $x\in X^T$, the inclusion is equivariant and so induces a pullback map.
The image of a class under this map is known as the localization of the class at $x$ and is denoted \[\iota^*_x(\kappa)=\kappa|_{x}.\]
For a closed subvariety $Y\subseteq X$ and fixed point $x\in X^T$, we have that $[Y]|_x=0$ whenever $x\notin Y$. Furthermore, the fundamental classes of the fixed points generate the cohomology in the localization of the ring at $\mathbb{Z}[t_1,...,t_n]$. In particular, any class can be expressed as \[\kappa=\sum_{x\in X^T}\kappa|_x\frac{[x]}{[x]|_x}.\]

\subsection{Schubert cells and varieties} The Grassmannian of k-planes in $\mathbb{C}^n$ is the set of linear subspaces of $\mathbb{C}^n$: \[\Gr(k,n)=\{S\subseteq\mathbb{C}^n: \text{dim}(S)=k\}.\]
There is a tautological sequence of vector bundles \[0\to\mathcal{S}\to\mathbb{C}^n\to\mathcal{Q}\to0,\] where the fiber of the tautological sub-bundle $\mathcal{S}$ at some point $S\in\Gr(k,n)$ is the vector space $S$, and the fiber of the tautlogical quotient bundle $\mathcal{Q}$ is the quotient $\mathbb{C}^n/S$.
Given a partition $\lambda=(\lambda_1\geq...\geq\lambda_k\geq0)$,  and $N\geq \lambda_1$, there is a Schubert cell $\Omega^{\lambda,\circ}$ of codimension $|\lambda|=\sum\limits_{i=1}^k\lambda_i$ in the Grassmannian $\Gr(k,N+k)$ defined by \[\{S\in\Gr(k,N+k):\dim(S\cap F_{N+i-\lambda_i})=i, \dim(S\cap F_{N+i-1-\lambda_{i}})=i-1,1\leq i\leq k\}\]
For some complete flag $F_1\subset...\subset F_{N+k}$, $\dim(F_i)=i$ for $1\leq i\leq N+k$. For our purposes, we use the opposite flag, $F_i=\langle e_n,...,e_{n+1-i}\rangle$.
The closure of the Schubert cell $\Omega^{\lambda,\circ}$ is the Schubert variety $\Omega^\lambda$, which is the disjoint union of Schubert cells \[\Omega^\lambda=\{S\in\Gr(k,N+k):\dim(S\cap F_{N+i-\lambda_i})\geq i,1\leq i\leq k\}=\bigcup\limits_{\beta\geq\lambda}\Omega^{\beta,\circ}.\]
The equivariant fundamental classes of the Schubert Varieties form a $H^*_T(pt)$-basis for the equivariant cohomology ring of the Grassmannian.

\subsection{Bott-Samelson varieties}

There is also a corresponding Bott-Samelson variety in the partial flag manifold $\Fl(1,...,k;N+k)$, \[\mathbb{V}^\lambda:=\{S_1\subset...\subset S_k: \dim(S_i)=i,S_i\subseteq F_{N+i-\lambda_i}, 1\leq i\leq k\}.\]
Define a map $\pi:\Fl(1,...k;N+k)\to\Gr(k,N+k)$ by $\pi(S_1\subset...\subset S_k)=S_k$.
By the conditions $S_i\subset S_k$, $\dim(S_i)=i$, and $S_i\subseteq F_{N+i-\lambda_i}$, we have $\dim(S_k\cap F_{N+i-\lambda_i})\geq i$, and so $\pi(\mathbb{V}^\lambda)\subseteq\Omega^\lambda$.
Then for any $S\in\Omega^\lambda$, define $S_i'=S\cap F_{N+i-\lambda_i}$.
By the Schubert variety conditions, $\dim(S_i')\geq i$, and so there exist subspaces $S_i\subseteq S_i'$ such that $\dim(S_i)=i$.
Then $S_1\subset...\subset S_k\in\mathbb{V}^\lambda$ and $\pi(S_1,...,S_k)=S$.
With this, $\pi(\mathbb{V}^\lambda)=\Omega^\lambda$.
In addition, for $S\in\Omega^{\lambda,\circ}$, there is a unique $S_1\subset...\subset S_k$ such that $\pi(S_1\subset...\subset S_k)=S$, specifically $S_i=S\cap F_{N+i-\lambda_i}$, since $\dim(S_i)=i$ in this case by the conditions on $\Omega^{\lambda,\circ}$.
So then $\pi(\mathbb{V}^\lambda)=\Omega^\lambda$, and $\pi$ is an isomorphism when restricted to $\pi^{-1}(\Omega^{\lambda,\circ})$.

Next we recall that $\mathbb{V}^\lambda$ can be constructed as a tower of projective bundles, so it is smooth. Furthermore $\pi$ is a birational morphism when restricted to $\mathbb{V}^\lambda$, so $\mathbb{V}^\lambda$ is a desingularization of $\Omega^\lambda$. This construction is similar to the one in \cite{aluffi.mihalcea:chern} but uses different conventions.
For this denote by $\mathcal{F}_i$ by the trivial bundle whose fiber is $F_i$ over the relevant space.
Start by defining $\mathbb{V}_1^\lambda:=\mathbb{P}(F_{N+1-\lambda_i})$ with tautological sequence \[0\to\mathcal{O}(-1)=\mathcal{L}_1\to F_{N+1-\lambda_i}\to\mathcal{Q}_1\to0.\]
Then for $2\leq i\leq k$, define the projective bundle \[p:\mathbb{V}_i^\lambda\to\mathbb{V}_{i-1}^\lambda\]
as follows: 
Take the bundles $\mathcal{L}'_j$ and $\mathcal{Q}'_j$ on $V_{i-1}^\lambda$ for $1\leq j\leq i-1$, then define \[\mathbb{V}_i^\lambda=\mathbb{P}((\mathcal{F}_{N+i-\lambda_i}/\mathcal{F}_{N+i-1-\lambda_{i-1}})\oplus\mathcal{Q}'_{i-1})\]
with tautological sequence \[0\to\mathcal{L}_i\to(\mathcal{F}_{N+i-\lambda_i}/\mathcal{F}_{N+i-1-\lambda_{i-1}})\oplus\mathcal{Q}'_{i-1}\to\mathcal{Q}_i\to0,\]
and then define \[\mathcal{Q}_j=p^*(\mathcal{Q}'_j)\ \text{and}\ \mathcal{L}_j=p^*(\mathcal{L}'_j)\] for $1\leq j\leq i-1$. Then define $\mathbb{V}^\lambda=\mathbb{V}^\lambda_k$.

With this, for a point $(S_1\subset...\subset S_k)\in\mathbb{V}^\lambda$, the fiber of the bundle $\mathcal{Q}_i$ at the point is $F_{N+i-\lambda_i}/S_i$ and the fiber of $\mathcal{L}_i$ is $S_i/S_{i-1}$ for $1\leq i\leq k$, using $S_0=\{0\}$.

\section{Pushforward formulae}

In this section we calculate the class of $\mathbb{V}^\lambda$ as a subvariety of $\mathbb{V}^\emptyset$ (Lemma \ref{dubprod}) as well as the localizations of the pushforward of any class (Lemma \ref{pfloc}). This leads to calculating the pushforward of any class (Lemma \ref{pfpoly}).

The equivariant cohomology ring of $\mathbb{V}^\emptyset$ is generated over $H^*_T(pt)$ by the first Chern classes of the dual line bundles $\mathcal{L}_i^\vee$, which we denote by $z_i=c_1^T(\mathcal{L}_i^\vee)$ for $1\leq i\leq k$. As a result, any class in $H^*_T(\mathbb{V}^\emptyset)$ can be expressed as a polynomial in $\mathbb{Z}[t_1,...,t_{N+k}][z_1,...,z_k]$. One useful thing to know for calculating pushforwards is that \[\pi_*([\mathbb{V}^\lambda])=[\Omega^\lambda],\] since $\mathbb{V}^\lambda$ maps birationally onto $\Omega^\lambda$ under $\pi$. The following lemma expresses $[\mathbb{V}^\lambda]$ as a polynomial in the $z_i$'s:
\begin{lem}\label{dubprod}
For a partition $\lambda=(\lambda_1\geq...\geq\lambda_k)$ with $N\geq \lambda_1$, \[[\mathbb{V}^\lambda]=\prod_{i=1}^k\prod_{j=k+1-i}^{k-i+\lambda_i}(z_i+t_j)\cap[\mathbb{V}^\emptyset]\]
\end{lem}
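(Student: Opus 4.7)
The plan is to induct on the stage $i$ in the tower $\mathbb{V}^\lambda_1 \to \mathbb{V}^\lambda_2 \to \cdots \to \mathbb{V}^\lambda_k = \mathbb{V}^\lambda$, proving that
\[[\mathbb{V}^\lambda_i] \;=\; \prod_{l=1}^{i}\prod_{j=k+1-l}^{k-l+\lambda_l}(z_l+t_j)\,\cap\,[\mathbb{V}^\emptyset_i]\]
in $H^*_T(\mathbb{V}^\emptyset_i)$; the case $i=k$ is the lemma. The key ingredient at each stage is the standard formula for the class of a projective sub-bundle: if $E' \subset E$ is a vector sub-bundle on some base $Y$, then $\mathbb{P}(E') \subset \mathbb{P}(E)$ is the zero scheme of the regular section of $\mathcal{O}_{\mathbb{P}(E)}(1)\otimes \pi^*(E/E')$ coming from the quotient $\pi^*E \twoheadrightarrow \pi^*(E/E')$, so its class is the equivariant top Chern class of that bundle.

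For the base case $i=1$, we have $\mathbb{V}^\lambda_1 = \mathbb{P}(F_{N+1-\lambda_1}) \subset \mathbb{P}(F_{N+1}) = \mathbb{V}^\emptyset_1$. Using $F_j = \langle e_n,\ldots,e_{n+1-j}\rangle$ with $n=N+k$, the quotient $F_{N+1}/F_{N+1-\lambda_1}$ decomposes equivariantly as $\bigoplus_{j=k}^{k+\lambda_1-1}\mathbb{C}_j$, with Chern roots $t_k,\ldots,t_{k+\lambda_1-1}$. Since $z_1 = c_1^T(\mathcal{L}_1^\vee) = c_1(\mathcal{O}(1))$, the sub-bundle formula gives $\prod_{j=k}^{k+\lambda_1-1}(z_1+t_j)\cap[\mathbb{V}^\emptyset_1]$, matching the claim.

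For the inductive step, let $p_i:\mathbb{V}^\emptyset_i\to\mathbb{V}^\emptyset_{i-1}$ be the bundle projection and set $U_i:= p_i^{-1}(\mathbb{V}^\lambda_{i-1}) = \mathbb{V}^\emptyset_i \times_{\mathbb{V}^\emptyset_{i-1}} \mathbb{V}^\lambda_{i-1}$. Flatness of $p_i$ yields $[U_i] = p_i^*[\mathbb{V}^\lambda_{i-1}]$, which by the inductive hypothesis equals $\prod_{l=1}^{i-1}\prod_{j=k+1-l}^{k-l+\lambda_l}(z_l+t_j)\cap[\mathbb{V}^\emptyset_i]$. The partition inequality $\lambda_{i-1}\ge\lambda_i$ ensures $F_{N+i-1-\lambda_{i-1}}\subseteq F_{N+i-\lambda_i}$, so the fiber of $\mathbb{V}^\lambda_i$ over $(S_1\subset\cdots\subset S_{i-1})\in\mathbb{V}^\lambda_{i-1}$ is $\mathbb{P}(F_{N+i-\lambda_i}/S_{i-1})\subset\mathbb{P}(F_{N+i}/S_{i-1})$, exhibiting $\mathbb{V}^\lambda_i$ as a projective sub-bundle of $U_i$. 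The quotient bundle $\mathcal{F}_{N+i}/\mathcal{F}_{N+i-\lambda_i}$ is equivariantly trivial with Chern roots $t_{k+1-i},\ldots,t_{k-i+\lambda_i}$, so the sub-bundle formula gives
\[[\mathbb{V}^\lambda_i]_{U_i} \;=\; \prod_{j=k+1-i}^{k-i+\lambda_i}(z_i+t_j)\,\cap\,[U_i].\]
Pushing forward along the closed inclusion $U_i\hookrightarrow\mathbb{V}^\emptyset_i$ via the projection formula multiplies the two Chern factors together against $[\mathbb{V}^\emptyset_i]$, completing the induction.

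The main subtlety is identifying, at each stage, the constructed bundle $E_i = (\mathcal{F}_{N+i}/\mathcal{F}_{N+i-1})\oplus\mathcal{Q}'_{i-1}$ with the ``natural'' bundle $\mathcal{F}_{N+i}/\mathcal{S}_{i-1}$ whose fiber is $F_{N+i}/S_{i-1}$ (and similarly for the $\lambda$-version and the sub-bundle inclusion used above). This needs the canonical $T$-equivariant splitting of the short exact sequence $0\to\mathcal{F}_{N+i-1}/\mathcal{S}_{i-1}\to\mathcal{F}_{N+i}/\mathcal{S}_{i-1}\to\mathcal{F}_{N+i}/\mathcal{F}_{N+i-1}\to 0$ coming from the coordinate basis $\{e_j\}$. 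Once this identification is in place, the rest is a formal application of the projective bundle formalism, and I do not anticipate additional obstacles.
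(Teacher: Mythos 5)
Your proof is correct and follows essentially the same route as the paper's: induct up the tower, realize each $\mathbb{V}^\lambda_i$ as a projective sub-bundle of the preimage $U_i = p_i^{-1}(\mathbb{V}^\lambda_{i-1})$ (which the paper denotes $\mathbb{V}^{\lambda'}_i$), apply the regular-section/top-Chern-class formula from \cite[B.5.6]{Ful:IT}, and finish with flat pullback plus the projection formula. The only genuine difference is that you explicitly flag the identification of the direct-sum bundle from the construction with the quotient bundle $\mathcal{F}_{N+i-\lambda_i}/\mathcal{S}_{i-1}$ (via the canonical $T$-equivariant splitting), a step the paper invokes silently.
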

\begin{proof}
We proceed by induction on $n$ in $\mathbb{V}^\lambda_n$. The base case $n=0$ is a point, which cannot have nonempty, proper subvarieties, so this holds trivially. The induction hypothesis is that \[[\mathbb{V}^\lambda_n]=\prod_{i=1}^n\prod_{j=k+1-i}^{k-i+\lambda_i}(z_i+t_j)\in H^*_T(\mathbb{V}^\emptyset).\]
If we denote by $\mathcal{S}_i$ the vector bundle on any subvariety of $Fl(1,...,n;N+k)$ whose fiber is $S_i$, where $Fl(1,...,n;N+k)=\{(S_1\subset...\subset S_n):\text{dim}(S_i)=i\}$, then by the construction of $\mathbb{V}^\lambda$, we have that $\mathbb{V}^\lambda_{n+1}=\mathbb{P}(\mathcal{F}_{N+n+1-\lambda_{n+1}}/\mathcal{S}_n)$ as a projective bundle over $\mathbb{V}^\lambda_n$.
Then for $\lambda'=(\lambda_1,...,\lambda_n,0,...,0)$, we have that as a projective bundle over $\mathbb{V}^\lambda_n$, $\mathbb{V}^{\lambda'}_{n+1}=\mathbb{P}(\mathcal{F}_{N+n+1}/\mathcal{S}_n)$.
In particular, $\mathbb{V}^\lambda_{n+1}$ is a sub-bundle of $\mathbb{V}^{\lambda'}_{n+1}$ over the same base space.
As a result, from \cite[B.5.6]{Ful:IT} (for details, see the proof of Lemma \ref{sub-bundle}), there is a regular section of $\mathcal{F}_{N+n+1}/\mathcal{F}_{N+n+1-\lambda_{n+1}}\otimes\mathcal{L}_{n+1}^\vee$ over $\mathbb{V}^{\lambda'}_{n+1}$ whose zero locus is $\mathbb{V}^\lambda$.
As a result \[[\mathbb{V}^\lambda_{n+1}]=c_{top}^T(\mathcal{F}_{N+n+1}/\mathcal{F}_{N+n+1-\lambda_{n+1}}\otimes\mathcal{L}_{n+1}^\vee)=\prod_{j=k-n}^{k-n-1+\lambda_{n+1}}(z_{n+1}+t_j)\in H^*_T(\mathbb{V}^{\lambda'}_{n+1}).\]
Now since both $\mathbb{V}^\emptyset_{n+1}$ and $\mathbb{V}^{\lambda'}_{n+1}$ are projectivizations of the bundle $\mathcal{F}_{N+n+1}/\mathcal{S}_n$ over their respective base spaces $\mathbb{V}^\emptyset_n$ and $\mathbb{V}^\lambda_n$, the induction hypothesis implies that \[[\mathbb{V}^{\lambda'}_{n+1}]=\prod_{i=1}^n\prod_{j=k+1-n}^{k-n+\lambda_i}(z_i+t_j)\in H^*_T(\mathbb{V}^\emptyset_{n+1}).\]
Then pushing forward along the inclusion maps $\iota_1:\mathbb{V}^\lambda_{n+1}\to\mathbb{V}^{\lambda'}_{n+1}$ and $\iota_2:\mathbb{V}^{\lambda'}_{n+1}\to\mathbb{V}^\emptyset_{n+1}$ gives \[\begin{split}[\mathbb{V}^\lambda_{n+1}]\cap[\mathbb{V}^\emptyset_{n+1}] & =(\iota_2)_*([\mathbb{V}^\lambda_{n+1}]\cap[\mathbb{V}^{\lambda'}_{n+1}])\\ & =(\iota_2)_*((\iota_1)_*([\mathbb{V}^\lambda_{n+1}]\cap[\mathbb{V}^\lambda_{n+1}])\cap[\mathbb{V}^{\lambda'}_{n+1}])\\ & =(\iota_2)_*\left(\prod_{j=k-n}^{k-n-1+\lambda_{n+1}}(z_{n+1}+t_j)\cap[\mathbb{V}^{\lambda'}_{n+1}]\right)\\ & =\left(\prod_{i=1}^n\prod_{k+1-i}^{k-i+\lambda_i}(z_i+t_j)\right)\prod_{k-n}^{k-n-1+\lambda_{n+1}}(z_{n+1}+t_j)\\ & = \prod_{i=1}^{n+1}\prod_{k+1-i}^{k-i+\lambda_i}(z_i+t_j).\end{split}\]
This completes the induction step and so completes the proof.
\end{proof}

In order to look at the pushforward of classes in $\mathbb{V}^\emptyset$, we use the fact that for any $\kappa\in H^*_T(\mathbb{V}^\emptyset)$, \[\kappa=\sum_{x\in(\mathbb{V}^\emptyset)^T}\kappa|_x\frac{[x]}{[x]|_x}.\]
We also know that $\pi_*([x])=[\pi(x)]$, and so \[\pi_*(\kappa)=\sum_{x\in(\mathbb{V}^\emptyset)^T}\kappa|_x\frac{[\pi(x)]}{[x]|_x}.\]
For a smooth point $x\in X$, we have $[x]|_x=c^T_{top}(T_xX)$, the Euler class of the tangent space at $x$.
Then localizing this at some fixed point $y\in (\Gr(k,N+k))^T$ gives 
\[\pi_*(\kappa)|_{y}=\sum_{x:\pi(x)=y}\frac{[y]|_y}{[x]|_{x}}\kappa|_{x}.\]
In the Grassmannian, the fixed points are given by the spans of $k$ vectors in the standard basis: \[e_{\lambda}=\langle e_{i_1},...,e_{i_k}\rangle,\]
where for a partition $\lambda$, $i_j=j+\lambda_{k+1-j}$. In $\mathbb{V}^\emptyset$, the fixed points are flags containing vectors in the standard basis and can be parameterized by a partition $\lambda$ and a permutation $w\in S_k$: 
\[e_{\lambda,w}=(\langle e_{i_{w(1)}}\rangle,\langle e_{i_{w(1)}},e_{i_{w(2)}}\rangle,...,\langle e_{i_{w(1)}},...,e_{i_{w(k)}}\rangle).\]
For certain combinations of $\lambda$ and $w$, $S_i\nsubseteq F_{N+i}$ for some $1\leq i\leq k$, meaning $e_{\lambda,w}\notin\mathbb{V}^\emptyset$, so those are not fixed points in $\mathbb{V}^\emptyset$. 
With these definitions, $\pi(e_{\lambda,w})=e_{\lambda}$ for all appropriate $w\in S_k$ and all $\lambda\leq (N^k)$. Now since $z_i=c_1^T(\mathcal{L}_i^\vee)$, we have that \[z_j|_{e_{\lambda,w}}=-t_{i_{w(j)}}.\]
With this, we are able to calculate localizations:
\begin{lem}\label{pfloc}
Given a polynomial $f(z_1,...,z_k)\in\mathbb{Z}[t_1,...,t_{N+k}][z_1,...,z_n]$, representing a class in $H^*_T(\mathbb{V}^\emptyset)$ and a partition $\lambda\leq (N^k)$, the localization of $\pi_*(f)$ at the fixed point corresponding to $\lambda$ in $\Gr(k,N+k)$ is
\[\pi_*(f(z_1,...,z_k)\cap[\mathbb{V}^\emptyset])|_{e_\lambda}=\sum_{w\in S_k}\left(\prod_{i=1}^{k-1}\prod_{j=i+1}^k\frac{(t_{j-i}-t_{i_{w(i)}})}{(t_{i_{w(j)}}-t_{i_{w(i)}}))}\right)f(-t_{i_{w(1)}},...,-t_{i_{w(k)}}),\]
where $i_j=j+\lambda_{k+1-j}$ for $1\leq j\leq k$.
\end{lem}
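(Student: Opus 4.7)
The plan is to apply the equivariant pushforward localization formula stated just before the lemma, then simplify the resulting ratio of Euler classes into the claimed closed form. First I would identify the $T$-fixed points in the fiber $\pi^{-1}(e_\lambda)$: since this fiber consists of flags $S_1 \subset \cdots \subset S_{k-1}$ inside $e_\lambda = \langle e_{i_1}, \ldots, e_{i_k}\rangle$ subject to $S_i \subseteq F_{N+i}$, the fixed points are exactly the $e_{\lambda, w}$ for those $w \in S_k$ satisfying the flag condition $i_{w(l)} \geq k - l + 1$ for all $l$. Using $z_i|_{e_{\lambda, w}} = -t_{i_{w(i)}}$, the class $\kappa = f(z_1,\ldots,z_k) \cap [\mathbb{V}^\emptyset]$ localizes to $\kappa|_{e_{\lambda, w}} = f(-t_{i_{w(1)}}, \ldots, -t_{i_{w(k)}})$.

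Next I would compute the two Euler classes. At $e_\lambda$ the Grassmannian tangent space $\mathrm{Hom}(\mathcal{S},\mathbb{C}^n/\mathcal{S})$ has weights $\{t_m - t_{i_j} : m \notin \{i_1, \ldots, i_k\},\; 1 \leq j \leq k\}$, while the tower of projective bundles presents $T_{e_{\lambda, w}} \mathbb{V}^\emptyset$ as a direct sum of vertical tangent spaces whose stage-$i$ weights are $\{t_m - t_{i_{w(i)}} : m \geq k-i+1,\; m \notin \{i_{w(1)}, \ldots, i_{w(i)}\}\}$. Reindexing the Grassmannian product by $j = w(i)$ and cancelling the factors common to both products (those with $m \geq k-i+1$ and $m \notin \{i_l\}$), the ratio collapses to
\[
\frac{[e_\lambda]|_{e_\lambda}}{[e_{\lambda, w}]|_{e_{\lambda, w}}} = \prod_{i=1}^{k}\frac{\prod_{m \notin \{i_l\},\; m < k-i+1}(t_m - t_{i_{w(i)}})}{\prod_{l > i,\; i_{w(l)} \geq k-i+1}(t_{i_{w(l)}} - t_{i_{w(i)}})}.
\]

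The main step is then to convert this intermediate ratio into the target product $\prod_{j>i}(t_{j-i}-t_{i_{w(i)}})/(t_{i_{w(j)}}-t_{i_{w(i)}})$. I would do this by multiplying the $i$-th factor on both top and bottom by $\prod_{m \in \{1,\ldots,k-i\} \cap \{i_l\}}(t_m - t_{i_{w(i)}})$: for each such $m = i_{l'}$, the flag condition $i_{w(l)} \geq k-l+1$ forces $l = w^{-1}(l')$ to satisfy $l > i$, because $m = i_{w(l)} \leq k-i$ gives $l \geq k - m + 1 \geq i+1$. This sets up a bijection between the added numerator factors and the previously missing denominator factors $(t_{i_{w(l)}} - t_{i_{w(i)}})$ for $l > i$ with $i_{w(l)} \leq k-i$. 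After the rewriting, the numerator becomes $\prod_{s=1}^{k-i}(t_s - t_{i_{w(i)}})$ and the denominator becomes $\prod_{l>i}(t_{i_{w(l)}} - t_{i_{w(i)}})$, as claimed.

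Finally I would observe that when $e_{\lambda, w} \notin \mathbb{V}^\emptyset$ some index satisfies $i_{w(l)} \leq k-l$, so the target product contains the vanishing factor $t_{j-l} - t_{i_{w(l)}}$ for $j = l + i_{w(l)} \in \{l+1,\ldots,k\}$; this allows extending the sum over valid $w$ to all of $S_k$ without changing its value. The principal obstacle is the bijection argument above: once one recognizes the flag condition $i_{w(l)} \geq k-l+1$ as exactly the combinatorial statement needed to pair surplus numerator factors with the deficit denominator factors at each stage $i$, the rest reduces to bookkeeping with weights of the tower of projective bundles.
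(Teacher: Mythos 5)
Your proof is correct and follows the same overall strategy as the paper's: compute the Euler classes of the tangent spaces of $\Gr(k,N+k)$ at $e_\lambda$ and of $\mathbb{V}^\emptyset$ at $e_{\lambda,w}$, reindex the Grassmannian product by $j=w(i)$, cancel common factors stage-by-stage, and finally observe that the closed form vanishes on the $w$ with $e_{\lambda,w}\notin\mathbb{V}^\emptyset$, allowing the sum to range over all of $S_k$. However, your treatment of the middle step is more careful than the paper's, and in a way that actually fills a gap. The paper passes from the raw ratio $\prod_i\frac{\prod_{j\in J_k\setminus J_i}(t_j-t_{i_{w(i)}})}{\prod_{j\in J_i\setminus J_k}(t_j-t_{i_{w(i)}})}$ to the closed product only after ``specializing to the case where $e_{\lambda,w}\in\mathbb{V}^\emptyset$ for all $w\in S_k$,'' i.e. assuming $i_j\geq k$ for every $j$, equivalently $\lambda\supseteq\delta$. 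Under that hypothesis $J_k\setminus J_i=\{1,\dots,k-i\}$ and $J_i\setminus J_k=\{i_{w(i+1)},\dots,i_{w(k)}\}$ on the nose, so the simplification is immediate, but the lemma is asserted for \emph{all} $\lambda\leq(N^k)$, and for general $\lambda$ the paper never re-derives the closed form for the surviving $w$. Your bijection does exactly that: you multiply top and bottom of the $i$-th factor by $\prod_{m\in\{1,\dots,k-i\}\cap\{i_l\}}(t_m-t_{i_{w(i)}})$ and use the flag condition $i_{w(l)}\geq k-l+1$ to pair each new numerator factor $m=i_{w(l)}\leq k-i$ with the previously missing denominator slot at index $l>i$, which proves the closed form for every $w$ with $e_{\lambda,w}\in\mathbb{V}^\emptyset$ and arbitrary $\lambda$. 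You also state the vanishing criterion correctly in terms of $i_{w(l)}\leq k-l$, whereas the paper phrases it as ``$w(i)<k+1-i$'' and says $t_{j-i}$ ranges over ``the possible range for $w(i)$''; this conflates $w(i)$ with $i_{w(i)}$ (they differ by $\lambda_{k+1-w(i)}\geq0$), and while the final conclusion is unaffected because the two conditions happen to coincide as ``there exists an index making the product vanish,'' your formulation is the precise one. In short, same route, but your version is the rigorous one for general $\lambda$.
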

\begin{proof}
Since the localizations of the fixed points are given by the Euler class of the tangent bundle, we look at the tangent bundle in each space.
The tangent bundle in the Grassmannian is given by $T_{\Gr(k,N+k)}=\mathcal{S}^\vee\otimes\mathcal{Q}$, where $S$ is the tautological sub-bundle and $Q$ is the tautological quotient bundle. To find the localization of the Euler class of this, we look at the tangent space at a fixed point $\lambda$ and see how $T$ acts on it. In particular, we know that the Chern roots of $\mathcal{S}^\vee|_{e_\lambda}$ are given by $-t_{i_{j}}$ for $1\leq j\leq k$, and the Chern roots of $\mathcal{Q}$ are given by $t_j$ for $j\neq i_p$ for any $1\leq p\leq k$. With this, we have
\[[e_{\lambda}]|_{e_\lambda}=c_{top}^T(\mathcal{S}^\vee\otimes\mathcal{Q})|_{e_\lambda}=\prod_{i\in J}\prod_{j\notin J}(t_j-t_i),\] where $J=\{j+\lambda_{k+1-j}:1\leq j\leq k\}$.

The tangent bundle in $\mathbb{V}^\emptyset$ is given by $T_{\mathbb{V}^\emptyset}=\oplus_{i=1}^k(\mathcal{L}_i^\vee\otimes\mathcal{Q}_i)$, since it is a tower of projective bundles and the relative tangent bundle for $p:\mathbb{V}_i^\emptyset\to\mathbb{V}_{i-1}^\emptyset$ is $\mathcal{L}_i^\vee\otimes\mathcal{Q}_i$ (see \cite[B.5.8]{Ful:IT}).
At some fixed point $e_{\lambda,w}=(S_1,...,S_k)$, we have that $\mathcal{L}_i|_{e_{\lambda,w}}=S_i/S_{i-1}$ and $\mathcal{Q}_i|_{e_{\lambda,w}}=F_{N+i}/S_i$.
As a result, we have that \[[e_{\lambda,w}]|_{e_{\lambda,w}}=c_{top}^T(\oplus_{i=1}^k(\mathcal{L}_i^\vee\otimes\mathcal{Q}_i))|_{e_{\lambda,w}}=\prod_{i=1}^k\prod_{j\in J_i}(t_j-t_{i_{w(i)}}),\]
where $J_i=\{j:k+1-i\leq j\leq N+k\}\backslash\{i_{w(j)},1\leq j\leq i\}$, and as before $i_j=j+\lambda_{k+1-j}$. Here note that $J_i$ must have exactly $N$ elements, meaning that $k+1-i\leq i_{w(j)}\leq N+k$ for all $1\leq j\leq i$ is required for this formula to be correct. This condition is equivalent to the condition that $S_i\subseteq F_{N+i}$, though, which is the condition required for $e_{\lambda,w}$ to be in $\mathbb{V}^\emptyset$, so this will be correct for all valid combinations of $w$ and $\lambda$.

To better cancel out terms, we can rewrite the product in $[e_{\lambda}]|_{e_\lambda}$ as \[[e_\lambda]|_{e_\lambda}=\prod_{i=1}^k\prod_{j\in J_k}(t_j-t_{i_{w(i)}}),\] since $i_{w(i)}$ will run across all of the indices for the basis vectors in S, and $J_k$ was defined to be all the other basis vectors. With this, we have that
\[\frac{[e_\lambda]|_{e_\lambda}}{[e_{\lambda,w}]|_{e_\lambda,w}}=\prod_{i=1}^k\frac{\prod_{j\in J_k\backslash J_i}(t_j-t_{i_{w(i)}})}{\prod_{j\in J_i\backslash J_k}(t_j-t_{i_{w(i)}})}\]

To simplify this further, we can specialize to the case where $\lambda$ satisfies the condition that $e_{\lambda,w}$ is in $\mathbb{V}^\emptyset$ for all $w\in S_k$, which in particular means that $i_j$ satisfies $k\leq i_j\leq N+k$ for all $1\leq j\leq k$. As a result, $J_k\backslash J_i=\{1,...,k-i\}$ and $J_i\backslash J_k=\{i_{w(k)},...,i_{w(i+1)}\}$. With this, we have that \[\frac{[e_\lambda]|_{e_\lambda}}{[e_{\lambda,w}]|_{e_{\lambda,w}}}=\prod_{i=1}^{k-1}\prod_{j=i+1}^k\frac{(t_{j-i}-t_{i_{w(i)}})}{(t_{i_{w(j)}}-t_{i_{w(i)}})}.\]

With this formula, it happens that if $e_{\lambda,w}\notin\mathbb{V}^\emptyset$, the numerator vanishes. In particular, $e_{\lambda,w}\notin\mathbb{V}^\emptyset$ exactly when there exists an $1\leq i\leq k$ such that $w(i)<k+1-i$. In such a case, the product $\prod_{j=i+1}^k(t_{j-i}-t_{i_{w(i)}})$ vanishes because the $t_{j-i}$ ranges from 1 to $k-i$, which is the possible range for $w(i)$. As a result, we can take the sum over all permutations $w\in S_k$, even if $e_{\lambda,w}\notin\mathbb{V}^\emptyset$, and it will still be correct.

Then we have \[\begin{split}\pi_*(f(z_1,...,z_k)\cap[\mathbb{V}^\emptyset])|_{e_\lambda} & =\sum_{e_{\lambda,w}\in\mathbb{V}^\emptyset}\frac{[e_\lambda]|_{e_\lambda}}{[e_{\lambda,w}]|_{e_{\lambda,w}}}f(-t_{i_{w(1)}},...,-t_{i_{w(k)}})\\ & =\sum_{e_{\lambda,w}\in\mathbb{V}^\emptyset}\prod_{i=1}^{k-1}\prod_{j=i+1}^k\frac{(t_{j-i}-t_{i_{w(i)}})}{(t_{i_{w(j)}}-t_{i_{w(i)}})}f(-t_{i_{w(1)}},...,-t_{i_{w(k)}})\\ & =\sum_{w\in S_k}\prod_{i=1}^{k-1}\prod_{j=i+1}^k\frac{(t_{j-i}-t_{i_{w(i)}})}{(t_{i_{w(j)}}-t_{i_{w(i)}})}f(-t_{i_{w(1)}},...,-t_{i_{w(k)}}).\end{split}\]

\end{proof}

If we define $x_1,..,x_k$ to be the Chern roots of the dual to the tautological sub-bundle, $\mathcal{S}^\vee$, in the Grassmannian, then for any symmetric polynomial $p(x_1,...,x_k)\in H^*_T(\Gr(k,N+k))$, \[p(x_1,...,x_k)|_{e_\lambda}=p(-t_{i_1},...,-t_{i_k}),\] where $i_j=j+\lambda_{k+1-j}$ for $1\leq j\leq k$.
As a result we can modify the localization formula from the above section to express $\pi_*(f(z_1,...,z_k)\cap[\mathbb{V}^\emptyset])$ as a symmetric polynomial in $\mathbb{Z}[t_1,...,t_{N+k}][x_1,...,x_k]^{S_k}$:
\begin{lem}\label{pfpoly}
Given a polynomial $f(z_1,...,z_k;t)\in\mathbb{Z}[t_1,...,t_{N+k}][z_1,...,z_n]$, \[\pi_*(f(z_1,...,z_k;t)\cap[\mathbb{V}^\emptyset])=\sum_{w\in S_k}\left(\prod_{i=1}^{k-1}\prod_{j=i+1}^k\frac{(t_{j-i}+x_{w(i)})}{(x_{w(i)}-x_{w(j)}))}\right)f(x_{w(1)},...,x_{w(k)};t).\]
\end{lem}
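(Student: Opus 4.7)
The plan is to identify the right-hand side, denoted $R(x;t)$, as a symmetric polynomial in $x_1,\ldots,x_k$ with coefficients in $\mathbb{Z}[t_1,\ldots,t_{N+k}]$, view it as a class in $H^*_T(\Gr(k,N+k))$ via the Chern roots of $\mathcal{S}^\vee$, and match its localizations at each $T$-fixed point against the formula in Lemma \ref{pfloc}.

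First I would check that $R(x;t)$ is symmetric in $x_1,\ldots,x_k$: substituting $x_a\mapsto x_{\sigma(a)}$ sends the summand indexed by $w$ to the summand indexed by $\sigma w$, so the sum is invariant. Next I would show $R(x;t)$ is polynomial. Using $\prod_{i<j}(x_{w(i)}-x_{w(j)})=\operatorname{sgn}(w)\prod_{i<j}(x_i-x_j)$, one may rewrite
\[R(x;t)=\frac{1}{\prod_{i<j}(x_i-x_j)}\sum_{w\in S_k}\operatorname{sgn}(w)\prod_{1\leq i<j\leq k}(t_{j-i}+x_{w(i)})\,f(x_{w(1)},\ldots,x_{w(k)};t),\]
whose numerator is antisymmetric in $x_1,\ldots,x_k$ (the same reindexing now produces a factor $\operatorname{sgn}(\sigma)$) and hence divisible by the Vandermonde. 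Thus $R(x;t)\in\mathbb{Z}[t_1,\ldots,t_{N+k}][x_1,\ldots,x_k]^{S_k}$ and defines a genuine class in $H^*_T(\Gr(k,N+k))$.

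Then I would localize at each fixed point $e_\lambda$ with $\lambda\leq(N^k)$. The Chern roots of $\mathcal{S}^\vee$ restrict as $-t_{i_1},\ldots,-t_{i_k}$ up to permutation, so by symmetry $R(x;t)|_{e_\lambda}=R(-t_{i_1},\ldots,-t_{i_k};t)$. The substitution $x_{w(i)}\mapsto -t_{i_{w(i)}}$ converts $t_{j-i}+x_{w(i)}$ into $t_{j-i}-t_{i_{w(i)}}$ and $x_{w(i)}-x_{w(j)}$ into $t_{i_{w(j)}}-t_{i_{w(i)}}$, producing exactly the expression for $\pi_*(f(z_1,\ldots,z_k;t)\cap[\mathbb{V}^\emptyset])|_{e_\lambda}$ from Lemma \ref{pfloc}. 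Since the Grassmannian is GKM and the localization map on $H^*_T(\Gr(k,N+k))$ is injective, agreement at every fixed point forces the desired equality.

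The only real obstacle is the polynomiality step, but as sketched this reduces to the classical fact that antisymmetric polynomials in $x_1,\ldots,x_k$ are divisible by the Vandermonde; the remainder of the argument is a routine translation of Lemma \ref{pfloc} across the substitution $x_j\mapsto -t_{i_j}$ combined with injectivity of localization.
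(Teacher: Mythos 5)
Your proposal is correct and takes essentially the same route as the paper: localize both sides at each fixed point $e_\lambda$, identify the localization of the right-hand side with the expression from Lemma \ref{pfloc} via the substitution $x_j\mapsto -t_{i_j}$, and conclude by injectivity of the localization map for GKM spaces. The only difference is that you verify up front that the right-hand side is a symmetric polynomial (and hence a bona fide class in $H^*_T(\Gr(k,N+k))$); the paper defers the same Vandermonde-divisibility argument to the remark immediately following the lemma.
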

\begin{proof}
By localizing both sides of the equation at any fixed point, Lemma \ref{pfloc} gives the same localizations. Since the two classes give the same localization at every fixed point, they are the same class by injectivity of the localization map \cite[Theorem 1.2.2]{GKM}.
\end{proof}

\begin{remark}
This pushforward can also be expressed in terms of BGG operators. See Proposition \ref{pushforwardoperator}.
\end{remark}

Note that while the expression contains fractions, it is a polynomial. This can be seen by the fact that for any $w\in S_k$, \[\prod_{i=1}^k\prod_{j=i+1}^k(x_{w(i)}-x_{w(j)})=\text{sgn}(w)\prod_{i=1}^{k-1}\prod_{j=i+1}^k(x_i-x_j)=\text{sgn}(w)\prod_{1\leq i<j\leq k}(x_i-x_j).\]
As a result the expression becomes \[\prod_{1\leq i<j\leq k}\frac{1}{x_i-x_j}\sum_{w\in S_k}\text{sgn}(w)\left(\prod_{i=1}^k\prod_{j=i+1}^k(t_{j-i}+x_{w(i)})\right)f(x_{w(1)},...,x_{w(k)}).\]
Since the numerator is an alternating sum over $S_k$, it is a skew-symmetric polynomial, which means that specializing $x_i=x_j$ for any $i\neq j$ results in 0. This implies $x_i-x_j$ divides it. As a result, the denominator divides the numerator, and so the expression is a polynomial, which will be symmetric because it is the quotient of two skew-symmetric polynomials.

\section{Factorial Schur functions}

In this section we will use the results of the previous section to conclude with the main result for cohomology (Theorem \ref{factSchur} and Corollary \ref{srulealg}). Specifically, this is the new method by which we re-prove the result that factorial Schur functions represent the equivariant Schubert classes in Grassmannians.

The factorial Schur function is given by \cite[eq. (6.3), (6.4)]{MacD:Schur}: \[s_\lambda(x|a)=\frac{det((x_i|a)^{\lambda_j+\delta_j})_{1\leq i,j\leq k}}{\prod_{i<j}(x_i-x_j)},\] where $\delta=(k-1,k-2,...,1,0)$, $a$ is a sequence $(a_i)_{i\in\mathbb{Z}}$, and $(x_i|a)^m=\prod_{j=1}^m(x_i+a_j)$.
For our purposes, in place of the sequence $a$ we use the sequence $t=(t_i)$, with $t_i=0$ for $i\leq 0$ and $i\geq N+k+1$ and the usual $t_i=c^T_1(\langle e_i\rangle)$.

As the main result in cohomology for this paper, we can recover the factorial Schur functions by pushing forward the class of $\mathbb{V}^\lambda$:

\begin{thm}\label{factSchur}
For any partition $\lambda\leq(N^k)$, \[[\Omega^\lambda]=\pi_*([\mathbb{V}^\lambda]\cap[\mathbb{V}^\emptyset])=s_\lambda(x|t).\]
\end{thm}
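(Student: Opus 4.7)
The first equality $[\Omega^\lambda] = \pi_*([\mathbb{V}^\lambda] \cap [\mathbb{V}^\emptyset])$ is immediate: since $\pi\colon \mathbb{V}^\lambda \to \Omega^\lambda$ is a birational morphism between varieties (with $\Omega^\lambda$ having only rational singularities), we have $\pi_*[\mathbb{V}^\lambda] = [\Omega^\lambda]$, as recalled in the introduction. So the content of the theorem lies in the second equality, and the plan is to reduce it directly to an algebraic identity using the tools already assembled.

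I would start by invoking Lemma \ref{dubprod} to replace $[\mathbb{V}^\lambda]$ by the explicit polynomial $p_\lambda(z,t) = \prod_{i=1}^k \prod_{j=k+1-i}^{k-i+\lambda_i}(z_i+t_j)$ in $H^*_T(\mathbb{V}^\emptyset)$. Then apply Lemma \ref{pfpoly} with $f = p_\lambda$, which yields
\[
\pi_*\bigl(p_\lambda(z,t)\cap[\mathbb{V}^\emptyset]\bigr) = \sum_{w\in S_k} \prod_{i=1}^{k-1}\prod_{j=i+1}^{k} \frac{t_{j-i}+x_{w(i)}}{x_{w(i)}-x_{w(j)}}\cdot p_\lambda(x_{w(1)},\ldots,x_{w(k)};t).
\]
Using the identity $\prod_{i<j}(x_{w(i)}-x_{w(j)}) = \mathrm{sgn}(w)\prod_{i<j}(x_i-x_j)$ noted after Lemma \ref{pfpoly}, pull the Vandermonde out of the sum to rewrite the right-hand side as
\[
\frac{1}{\prod_{i<j}(x_i-x_j)}\sum_{w\in S_k} \mathrm{sgn}(w) \prod_{i=1}^{k} \Bigl( \prod_{j=i+1}^{k}(t_{j-i}+x_{w(i)})\Bigr)\,p_\lambda(x_{w(1)},\ldots,x_{w(k)};t).
\]

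The key calculation is then to collapse, for each fixed $i$, the product $\prod_{j=i+1}^{k}(t_{j-i}+x_{w(i)})$ from the Jacobi factor together with the $i$-th inner product $\prod_{j=k+1-i}^{k-i+\lambda_i}(x_{w(i)}+t_j)$ coming from $p_\lambda$. Reindexing $m = j-i$ in the first product gives the range $m=1,\ldots,k-i$, which abuts exactly onto the range $j=k+1-i,\ldots,k-i+\lambda_i$ of the second; concatenating yields $\prod_{m=1}^{k-i+\lambda_i}(x_{w(i)}+t_m) = (x_{w(i)}|t)^{\lambda_i+k-i}$. Thus the numerator becomes
\[
\sum_{w\in S_k}\mathrm{sgn}(w)\prod_{i=1}^{k}(x_{w(i)}|t)^{\lambda_i+k-i},
\]
which is the Leibniz expansion of $\det\bigl((x_i|t)^{\lambda_j+k-j}\bigr)_{1\le i,j\le k}$. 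Dividing by the Vandermonde gives precisely $s_\lambda(x|t)$.

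The only real obstacle is bookkeeping: verifying that the two product ranges in the collapse step match up without a gap or overlap, and pinning down the indexing convention of the determinant so that the sum over $S_k$ lines up with the Leibniz formula (in particular, that the factors $(x_{w(i)}|t)^{\lambda_i+k-i}$ are what one gets by taking the product over rows, not columns, of the matrix $((x_i|t)^{\lambda_j+k-j})$). Once the index shift $m=j-i$ is written down, everything else is mechanical.
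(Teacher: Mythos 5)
Your proposal is correct and follows essentially the same route as the paper: invoke Lemma \ref{dubprod}, substitute into Lemma \ref{pfpoly}, collapse the Jacobi factor $\prod_{j=i+1}^{k}(t_{j-i}+x_{w(i)})=\prod_{m=1}^{k-i}(x_{w(i)}+t_m)$ onto the adjacent range from $p_\lambda$, and recognize the Leibniz expansion of the determinant over the Vandermonde. The index bookkeeping you flag is exactly the content of the paper's computation, and your version of it is right.
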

\begin{proof}
From the fact that $\pi$ is a surjective, birational morphism, $\pi_*([\mathbb{V}^\lambda]\cap[\mathbb{V}^\emptyset])=[\Omega^\lambda]$.
Then from Lemma \ref{dubprod}, we have that \[[\mathbb{V}^\lambda]=\prod_{i=1}^k\prod_{k+1-i}^{k-i+\lambda_i}(z_i+t_j).\]
Then we rewrite $\prod_{j=i+1}^k(t_{j-i}+x_{w(i)})=\prod_{j=1}^{k-i}(t_{j}+x_{w(i)})$, which gives that \[\left(\prod_{i=1}^k\prod_{j=1}^{k-i}(t_j+x_{w(i)})\right)\prod_{i=1}^k\prod_{j=k+1-i}^{k-i+\lambda_i}(t_j+x_{w(i)})=\prod_{i=1}^k\prod_{j=1}^{k-i+\lambda_i}(t_j+x_{w(i)}).\]
As a result, plugging $f=\prod_{i=1}^k\prod_{j=k+1-i}^{k-i+\lambda_i}(z_i+t_j)$ into Lemma \ref{pfpoly} and simplifying gives \[\pi_*(\prod_{i=1}^k\prod_{j=k+1-i}^{k-i+\lambda_i}(z_i+t_j)\cap[\mathbb{V}^\emptyset])=\prod_{1\leq i<j\leq k}\frac{1}{x_i-x_j}\sum_{w\in S_k}\text{sgn}(w)\prod_{i=1}^k\prod_{j=1}^{k-i+\lambda_i}(t_j+x_{w(i)}).\]
Then using $\prod_{j=1}^{k-i+\lambda_i}(x_{w(i)}+t_j)=(x_{w(i)}|-t)^{k-i+\lambda_i}=(x_{w(i)}|-t)^{\delta_i+\lambda_i}$, we obtain \[\pi_*(\prod_{i=1}^k\prod_{j=k+1-i}^{k-i+\lambda_i}(z_i+t_j)\cap[\mathbb{V}^\emptyset])=\prod_{1\leq i<j\leq k}\frac{1}{x_i-x_j}\sum_{w\in S_k}\text{sgn}(w)\prod_{i=1}^k(x_{w(i)}|-t)^{\delta_i+\lambda_i}.\]
By the definition of the determinant $\text{det}(A)=\sum_{w\in S_k}\text{sgn}(w)\prod_{i=1}^ka_{w(i),i}$, we have \[\pi_*(\prod_{i=1}^k\prod_{j=k+1-i}^{k-i+\lambda_i}(z_i+t_j)\cap[\mathbb{V}^\emptyset])=\frac{\text{det}((x_i|-t)^{\lambda_j+\delta_j})_{1\leq i,j\leq k}}{\prod_{1\leq i<j\leq k}(x_i-x_j)}=s_\lambda(x|-t).\]
\end{proof}

\begin{cor}[The straightening rule]\label{srulealg}
For any sequence of nonnegative integers $\mu=(\mu_1,...,\mu_k)$, let \[p_\mu(z,t)=\prod_{i=1}^k\prod_{j=k+1-i}^{k-i+\mu_i}(z_i+t_j).\]
Then
\begin{enumerate}
\item If $\mu$ is a partition, then $\pi_*(p_\mu(z,t)\cap[\mathbb{V}^\emptyset])=s_\mu(x|t)$.

\item If $\mu$ is not a partition and the values $\mu_i+k-i$ are all distinct, then there exists a permutation $w$ and a partition $\lambda$ such that $\lambda+\delta=(\mu_{w(1)}+\delta_{w(1)},...,\mu_{w(k)}+\delta_{w(k)})$, and $\pi_*(p_\mu(z,t)\cap[\mathbb{V}^\emptyset])=\text{sgn}(w)s_\lambda(x|t)$.

\item If $\mu$ is not a partition and the values $\mu_i+k-i$ are not all distinct, then $\pi_*(p_\mu(z,t)\cap[\mathbb{V}^\emptyset])=0$.
\end{enumerate}
\end{cor}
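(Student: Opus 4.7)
The plan is to apply Lemma \ref{pfpoly} to the class $p_\mu(z,t)$ and observe that the determinantal manipulation performed in the proof of Theorem \ref{factSchur} nowhere used the hypothesis that $\mu$ was a partition. Indeed, combining $\prod_{j=i+1}^k(t_{j-i}+x_{w(i)}) = \prod_{j=1}^{k-i}(t_j+x_{w(i)})$ with the specialization of $p_\mu$ exactly as in that proof collapses the double product to $\prod_{j=1}^{\mu_i+\delta_i}(t_j+x_{w(i)}) = (x_{w(i)}|t)^{\mu_i+\delta_i}$, and extracting $\mathrm{sgn}(w)$ from the Vandermonde factor yields
\[
\pi_*(p_\mu(z,t)\cap[\mathbb{V}^\emptyset]) = \frac{\det\bigl((x_i|t)^{\mu_j+\delta_j}\bigr)_{1\leq i,j\leq k}}{\prod_{1\leq i<j\leq k}(x_i-x_j)}
\]
for any composition $\mu$. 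Part (1) is then the definition of $s_\mu(x|t)$.

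Part (3) is immediate from this formula: if $\mu_i+\delta_i = \mu_{i'}+\delta_{i'}$ for some $i\neq i'$, then columns $i$ and $i'$ of the matrix in the numerator coincide, forcing the determinant to vanish.

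For part (2), the plan is to sort. Since the values $\mu_1+\delta_1,\ldots,\mu_k+\delta_k$ are assumed distinct, there is a unique permutation $w\in S_k$ that arranges them in strictly decreasing order, i.e.\ $\mu_{w(1)}+\delta_{w(1)} > \mu_{w(2)}+\delta_{w(2)} > \cdots > \mu_{w(k)}+\delta_{w(k)}$. Define $\lambda_i := \mu_{w(i)}+\delta_{w(i)}-\delta_i$; one checks that strict decrease combined with $\delta_i-\delta_{i+1}=1$ forces $\lambda_i \geq \lambda_{i+1}$, and $\lambda_k = \mu_{w(k)}+\delta_{w(k)} \geq 0$, so $\lambda$ is a genuine partition with $\lambda+\delta = (\mu_{w(1)}+\delta_{w(1)},\ldots,\mu_{w(k)}+\delta_{w(k)})$. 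Permuting the columns of the determinant by $w$ multiplies it by $\mathrm{sgn}(w)$, and the result is the numerator of $s_\lambda(x|t)$, giving $\pi_*(p_\mu\cap[\mathbb{V}^\emptyset]) = \mathrm{sgn}(w)\,s_\lambda(x|t)$.

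I do not anticipate a serious obstacle: once one has observed that the computation in Theorem \ref{factSchur} makes no use of the partition hypothesis, all three cases reduce to elementary properties of the determinant. The only delicate point is the bookkeeping in part (2), specifically keeping straight whether $w$ permutes the components of $\mu$ or its inverse does, so that the sign $\mathrm{sgn}(w)$ agrees with the convention in the statement.
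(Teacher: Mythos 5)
Your proposal follows essentially the same route as the paper's own proof: derive the determinantal formula $\pi_*(p_\mu(z,t)\cap[\mathbb{V}^\emptyset]) = \det\bigl((x_i|t)^{\mu_j+\delta_j}\bigr)/\prod_{i<j}(x_i-x_j)$ for an arbitrary composition by re-running the computation of Theorem~\ref{factSchur}, then deduce (3) from repeated columns and (2) by sorting $\mu+\delta$ into decreasing order and reading off the sign from the column permutation. As a minor reassurance on your flagged ``delicate point'': since $\mathrm{sgn}(w)=\mathrm{sgn}(w^{-1})$, the choice of whether $w$ or $w^{-1}$ sorts $\mu+\delta$ cannot affect the resulting sign, so the bookkeeping you were worried about is harmless.
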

\begin{proof}
Claim (1) is Theorem \ref{factSchur}.
By the same process, we have that for any sequence $\mu$, \[\pi_*(p_\mu(z,t)\cap[\mathbb{V}^\emptyset])=\frac{\text{det}((x_i|t)^{\mu_j+\delta_j})_{1\leq i,j\leq k}}{\prod_{1\leq i<j\leq k}(x_i-x_j)}.\]
From the properties of the determinant, applying a permutation $w$ to the columns of the matrix $((x_i|t)^{\mu_j+\delta_j})_{1\leq i,j\leq k}$ will multiply the determinant of $\text{sgn}(w)$.
In particular, if $\lambda+\delta=w(\mu+\delta)$, then \[\text{det}((x_i|t)^{\mu_j+\delta_j})_{1\leq i,j\leq k}=\text{sgn}(w)\text{det}((x_i|t)^{\lambda_i+\delta_i})_{1\leq i,j\leq k}.\]
If such a permutation $w$ and a partition $\lambda$ exist, this proves claim (2).
Assuming the values $\mu_i+\delta_i$ are all distinct, they can be arranged in descending order, and as a result there exists a permutation $w$ such that $(\mu_{w(1)}+\delta_{w(1)},...,\mu_{w(k)}+\delta_{w(k)})$ is in descending order.
Therefore $\mu_{w(i)}+\delta_{w(i)}\geq \mu_{w(i+1)}+\delta_{w(i+1)}+1$, and as a result $\mu_{w(i)}+\delta_{w(i)}-(k-i)\geq \mu_{w(i+1)}+\delta_{w(i+1)}-(k-(i+1))$.
So then $\lambda=(\lambda_1,...,\lambda_k)$ with $\lambda_i=\mu_{w(i)}+\delta_{w(i)}-(k-i)$ for $1\leq i\leq k$ is a partition and $\lambda_i+\delta_i=\mu_{w(i)}+\delta_{w(i)}$, as required.
For claim (3), note that if two values of $\mu_i+\delta_i$ are not distinct, then two columns in the matrix are identical, which implies the determinant is 0.
\end{proof}

Claims (2) and (3) together are known as the straightening rule.

\begin{exmps}\ \\
\begin{enumerate}
\item $\pi_*(p_{(1,2)}(z,t)\cap[\mathbb{V}^\emptyset])=0$, since in this case $\mu=(1,2)$, and $\mu+\delta=(1+1,2+0)=(2,2)$, which does not have distinct parts.

\item $\pi_*(p_{(0,2)}(z,t)\cap[\mathbb{V}^\emptyset])=-s_{(1,1)}(x|-t)$, since in this case $\mu=(0,2)$ and $\mu+\delta=(0,2)+(1,0)=(1,2)$, which can be permuted into $\lambda+\delta=(2,1)$, which results in $\lambda=(2,1)-(1,0)=(1,1)$. Since the permutation required to do this is odd, there is a minus sign.
\end{enumerate}
\end{exmps}

\section{K-Theory Preliminaries}

\subsection{K-Theory}
We recall some basic facts about K-theory, using \cite{Brion:flag} as a reference.
The Grothendieck ring $K^0(X)$ is generated by equivalence classes of vector bundles subject to the relation that whenever there is an exact sequence  \[0\to E_1\to E_2\to E_3\to0,\] the classes of those vector bundles satisfy $[E_1]+[E_3]=[E_2]$. The multiplication of classes is given by the class of the tensor product of the vector bundles.
In smooth varieties, a coherent sheaf can be resolved by finitely many vector bundles, and so the class of a coherent sheaf can be expressed in $K^0(X)$ as an alternating sum classes of vector bundles.
Any morphism of schemes $f:X\to Y$ induces a pullback map $f^*:K^0(Y)\to K^0(X)$ by $f^*([E])=[f^*E]$.
Since any vector bundle has a sheaf of sections, it can be seen as a sheaf, and so a pushforward can be defined for any proper morphism $f:X\to Y$, $f_*:K^0(X)\to K^0(Y)$ by $f_*([E])=\sum_{j\geq 0}(-1)^j[R^jf_*(E)]$.
There is also a projection formula $f_*((f^*\alpha)\cdot\beta)=\alpha\cdot f_*\beta$ \cite[Section 3.3]{Brion:flag}.
Given a vector bundle $V$, the $\lambda_y$ class of $V$ is given by \[\lambda_y(V)=\sum_{p\geq0}[\Lambda^pV]\cdot y^p.\]
This has the property that for any exact sequence of vector bundles \[0\to V_1\to V_2\to V_3\to 0,\] we have that $\lambda_y(V_2)=\lambda_y(V_1)\lambda_y(V_3)$ \cite{Hir:K}.

For equivariant K-theory, the equivariant Grothendieck ring $K_T(X)$ is instead generated by equivalence classes of equivariant vector bundles or equivariant coherent sheaves. For equivariant morphisms, there are similarly pushforward and pullbacks maps on the equivariant Grothendieck rings \cite[Chapter 5.2]{CG:eqK}.

\subsection{Equivariant localization}
Like in cohomology, for each $T$-fixed point $x\in X^T$, the inclusion map $\iota_x:\{x\}\to X$ induces a pullback map on equivariant K-theory $\iota_x^*:K_T(X)\to K_T(x)$. 
For $\alpha\in K_T(X)$, define the localization of $\alpha$ at the fixed point $x$ as $\alpha|_x=\iota_x^*(\alpha)$.
The equivariant K-theory of a point is the representation ring of $T$, which is isomorphic to the Laurent polynomial ring $\mathbb{Z}[e^{\pm t_1},...,e^{\pm t_{N+k}}]$, where $e^{t_i}$ are the characters corresponding to a basis of the Lie algebra of $T$ and $e^{t_i}=[\mathbb{C}_{t_i}]$ \cite[5.2.1]{CG:eqK}. 
When there are finitely many fixed points, the localization map $\iota^*:K_T(X)\to K_T(X^T)=\oplus_{x\in X^T}K_T(x)$ is injective by the localization theorem, see \cite{Niel:eqK}, so a class can be determined by its localizations. 
As a result, since the structure sheaf of a fixed point is only supported on that fixed point, we can express any $\kappa\in K_T(X)$ in terms of the structure sheaves of the fixed points by \[\kappa=\sum_{x\in X^T}\frac{\kappa|_x}{[\mathcal{O}_x]|_x}[\mathcal{O}_x].\]
For a smooth point $x\in X$, we have $[\mathcal{O}_x]|_x=\lambda_{-1}(T^*_xX)$.
Then since $\pi_*([\mathcal{O}_x])=[\mathcal{O}_{\pi(x)}]$, we can use this to calculate the localizations of the pushforward of a class: \[\pi_*(\kappa)|_x=\sum_{y:\pi(y)=x}\frac{[\mathcal{O}_x]|_x}{[\mathcal{O}_y]|_y}\kappa|_y.\]

\section{Calculation of the K-theoretic pushforward}

This section calculates the class of the structure sheaf of $\mathbb{V}^\lambda$ on $\mathbb{V}^\emptyset$ (Lemma \ref{Kpf}) as well as the localizations of the pushforward of any class (Lemma \ref{Kpfloc}).

In the Bott-Samelson varieties, the fixed points are \[e_{\lambda,w}=(\langle e_{i_{w(1)}}\rangle,\langle e_{i_{w(1)}},e_{i_{w(2)}}\rangle,...,\langle e_{i_{w(1)}},...,e_{i_{w(k)}}\rangle),\] where $i_j=j+\lambda_{k+1-j}$, for appropriate pairs $(\lambda,w)$, where $\lambda=(\lambda_1,...,\lambda_k)$ is a partition with $\lambda_1\leq N$ and $w\in S_k$.
In $\Gr(k,N+k)$, the fixed points are $e_\lambda=\langle e_{i_1},...,e_{i_k}\rangle$, where $i_j=j+\lambda_{k+1-j}$.
Under the map $\pi:\mathbb{V}^\emptyset\to\Gr(k,N+k)$, $\pi(e_{\lambda,w})=e_\lambda$ for all $w\in S_k$ such that $e_{\lambda,w}\in\mathbb{V}^\emptyset$.
Then, similarly to what was done in Lemma \ref{pfloc}, using the fact that $[\mathcal{O}_{e_\lambda}]|_{e_\lambda}=\lambda_{-1}(T^*_{e_\lambda}\Gr(k,N+k))$ and $[\mathcal{O}_{e_{\lambda,w}}]|_{e_{\lambda,w}}=\lambda_{-1}(T^*_{e_{\lambda,w}}\mathbb{V}^\emptyset)$, we obtain
\begin{lem}\label{Kpfloc}
Given a Laurent polynomial in the $\mathcal{L}_i$'s, \[f(\mathcal{L}_1^{\pm1},...,\mathcal{L}_k^{\pm1};e^{\pm t})\in\mathbb{Z}[e^{\pm t_1},...,e^{\pm t_{N+k}}][\mathcal{L}_1^{\pm 1},...,\mathcal{L}_k^{\pm1}]\] representing a class in $K_T(\mathbb{V}^\emptyset)$ and a partition $\lambda\leq(N^k)$, the localization of $\pi_*(f)$ at the fixed point $e_\lambda$ in $\Gr(k,N+k)$ is \[\pi_*(f(\mathcal{L}_1^{\pm1},...,\mathcal{L}_k^{\pm1}))|_{e_{\lambda}}=\sum_{w\in S_k}\left(\left(\prod_{i=1}^{k-1}\prod_{j=i+1}^k\frac{(1-e^{t_{i_{w(i)}}-t_{j-i}})}{(1-e^{t_{i_{w(i)}}-t_{i_{w(j)}}})}\right)f(e^{\mp t_{i_{w(1)}}},...,e^{\mp t_{i_{w(k)}}};e^{\pm t})\right)\]
for $i_j=j+\lambda_{k+1-j}$.
\end{lem}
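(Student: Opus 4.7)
The approach is to mirror the proof of Lemma \ref{pfloc} in the K-theoretic setting, replacing top Chern classes of tangent bundles by $\lambda_{-1}$ of the cotangent bundles and additive $T$-weights by multiplicative characters. Concretely, I would start from the K-theoretic pushforward localization formula
\[\pi_*(\kappa)|_{e_\lambda} = \sum_{e_{\lambda,w} \in \pi^{-1}(e_\lambda) \cap (\mathbb{V}^\emptyset)^T} \frac{[\mathcal{O}_{e_\lambda}]|_{e_\lambda}}{[\mathcal{O}_{e_{\lambda,w}}]|_{e_{\lambda,w}}}\,\kappa|_{e_{\lambda,w}},\]
combined with the identification $[\mathcal{O}_x]|_x = \lambda_{-1}(T^*_xX)$ at any smooth $T$-fixed point.

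The next step is to unpack the two structure-sheaf localizations. Using $T\Gr(k,N+k) = \mathcal{S}^\vee \otimes \mathcal{Q}$ and the fact that at $e_\lambda$ both $\mathcal{S}$ and $\mathcal{Q}$ split into characters indexed respectively by $J = \{i_1,\ldots,i_k\}$ and its complement in $\{1,\ldots,N+k\}$, I would obtain $[\mathcal{O}_{e_\lambda}]|_{e_\lambda}$ as a product of K-theoretic Euler factors $(1 - e^{t_i - t_j})$ with $i \in J$, $j \notin J$. For $\mathbb{V}^\emptyset$, the tower-of-projective-bundles structure gives $T\mathbb{V}^\emptyset = \bigoplus_{i=1}^k \mathcal{L}_i^\vee \otimes \mathcal{Q}_i$, and the fiber descriptions $\mathcal{L}_i|_{e_{\lambda,w}} = S_i/S_{i-1}$, $\mathcal{Q}_i|_{e_{\lambda,w}} = F_{N+i}/S_i$ yield $[\mathcal{O}_{e_{\lambda,w}}]|_{e_{\lambda,w}} = \prod_{i=1}^k \prod_{j \in J_i}(1 - e^{t_{i_{w(i)}} - t_j})$, with $J_i$ as introduced in Lemma \ref{pfloc}.

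Once both are in hand, I would form the ratio and cancel the factors indexed by $J_i \cap J_k$. The leftover factors are indexed by $J_k \setminus J_i$ and $J_i \setminus J_k$, which by the same combinatorial argument used in Lemma \ref{pfloc} collapse to $\{1,\ldots,k-i\}$ and $\{i_{w(i+1)},\ldots,i_{w(k)}\}$ respectively, producing the displayed double product. To drop the constraint $e_{\lambda,w} \in \mathbb{V}^\emptyset$ and sum over all of $S_k$, I would invoke the K-theoretic vanishing $1 - e^0 = 0$: whenever $w(i) < k+1-i$, some numerator factor satisfies $t_{i_{w(i)}} = t_{j-i}$ and kills the whole summand. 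The final substitution $\mathcal{L}_i \mapsto [\mathcal{L}_i]|_{e_{\lambda,w}}$ in the Laurent polynomial $f$ then gives the stated evaluation. The main obstacle is the multiplicative bookkeeping -- matching the cancellation pattern and the extension-to-all-$w$ argument from the additive setting -- together with carefully pinning down the character of $\mathcal{L}_i$ at $e_{\lambda,w}$ so that the sign in $e^{\mp t_{i_{w(i)}}}$ is correctly reproduced; beyond that, no essentially new idea is required beyond a systematic translation from Lemma \ref{pfloc}.
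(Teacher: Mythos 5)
Your proposal follows exactly the paper's argument: the same K-theoretic localization formula, the identification $[\mathcal{O}_x]|_x=\lambda_{-1}(T^*_xX)$, the same ratio of products over $J$ and $J_i$ with the cancellations imported from Lemma \ref{pfloc}, and the extension of the sum to all of $S_k$ via the vanishing of a numerator factor $1-e^{t_{i_{w(i)}}-t_{j-i}}$. This matches the paper's proof (which in fact is terser, simply citing "the same cancellations and simplifications from the proof of Lemma \ref{pfloc}"), so no further comment is needed.
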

\begin{proof}
Since $[\mathcal{O}_{e_\lambda}]|_{e_\lambda}=\lambda_{-1}(T_{e_\lambda}^\vee(\Gr(k,N+k)))$ and $[\mathcal{O}_{e_{\lambda,w}}]|_{e_{\lambda,w}}=\lambda_{-1}(T_{e_{\lambda,w}}^\vee(\mathbb{V}^\emptyset))$, we have that \[\frac{[\mathcal{O}_{e_{\lambda}}]|_{e_\lambda}}{[\mathcal{O}_{e_{\lambda,w}}]|_{e_{\lambda,w}}}=\frac{\prod_{i\in J}\prod_{j\notin J}\lambda_{-1}(\mathbb{C}_{t_i}\otimes\mathbb{C}^\vee_{t_j})}{\prod_{i=1}^k\prod_{j\in J_i}\lambda_{-1}(\mathbb{C}_{t_{i_{w(i)}}}\otimes\mathbb{C}^\vee_{t_j})},\] where $J=\{i_j:1\leq j\leq k\}$ and $J_i=\{j:k+1-i\leq j\leq N+k\}\backslash\{i_{w(j)},1\leq j\leq i\}$ with $i_j=j+\lambda_{k+1-j}$, as in the proof of Lemma \ref{pfloc}.
Then with the same cancellations and simplifications from the proof of Lemma \ref{pfloc}, we obtain \[\frac{[\mathcal{O}_{e_{\lambda}}]|_{e_\lambda}}{[\mathcal{O}_{e_{\lambda,w}}]|_{e_{\lambda,w}}}=\prod_{i=1}^{k-1}\prod_{j=i+1}^k\frac{\lambda_{-1}(\mathbb{C}_{t_{i_{w(i)}}}\otimes\mathbb{C}^\vee_{t_j})}{\lambda_{-1}(\mathbb{C}_{t_{i_{w(i)}}}\otimes\mathbb{C}_{t_{i_{w(j)}}}^\vee)}.\]
Then from the fact that $\lambda_{-1}(\mathbb{C}_{t_i}\otimes\mathbb{C}^\vee_{t_j})=1-[\mathbb{C}_{t_i}\otimes\mathbb{C}^\vee_{t_j}]=1-e^{t_i-t_j}$, we obtain \[\frac{[\mathcal{O}_{e_{\lambda}}]|_{e_\lambda}}{[\mathcal{O}_{e_{\lambda,w}}]|_{e_{\lambda,w}}}=\prod_{i=1}^{k-1}\prod_{j=i+1}^k\frac{(1-e^{t_{i_{w(i)}}-t_{j-i}})}{(1-e^{t_{i_{w(i)}}-t_{i_{w(j)}}})}.\]
Then we have \[\begin{split}\pi_*(f(\mathcal{L}_1^{\pm1},...,\mathcal{L}_k^{\pm1}))|_x & =\sum_{e_{\lambda,w}\in\mathbb{V}^\emptyset}\frac{[\mathcal{O}_{e_\lambda}]|_{e_\lambda}}{[\mathcal{O}_{e_{\lambda,w}}]|_{e_{\lambda,w}}}f(e^{\mp t_{i_{w(1)}}},...,e^{\mp t_{i_{w(k)}}};e^{\pm t})\\ & =\sum_{e_{\lambda,w}\in\mathbb{V}^\emptyset}\prod_{i=1}^{k-1}\prod_{j=i+1}^k\frac{(1-e^{t_{i_{w(i)}}-t_{j-i}})}{(1-e^{t_{i_{w(i)}}-t_{i_{w(j)}}})}f(e^{\mp t_{i_{w(1)}}},...,e^{\mp t_{i_{w(k)}}};e^{\pm t})\\ & =\sum_{w\in S_k}\prod_{i=1}^{k-1}\prod_{j=i+1}^k\frac{(1-e^{t_{i_{w(i)}}-t_{j-i}})}{(1-e^{t_{i_{w(i)}}-t_{i_{w(j)}}})}f(e^{\mp t_{i_{w(1)}}},...,e^{\mp t_{i_{w(k)}}};e^{\pm t}).\end{split}\]
\end{proof}

To calculate $[\mathcal{O}_{\mathbb{V}^\lambda}]$, we need to use some facts from \cite[Appendix B]{Ful:IT} that can be summarized as the following Lemma.
\begin{lem}\label{sub-bundle}
Let $X$ be a projective variety with $p:E,F\to X$ vector bundles and $E\subseteq F$. Then for the quotient bundle $G=F/E$, $[\mathcal{O}_{\mathbb{P}(E)}]=\lambda_{-1}((\mathcal{O}_{\mathbb{P}(F)}(1)\otimes p^*(G))^\vee)\in K_T(\mathbb{P}(F))$
\end{lem}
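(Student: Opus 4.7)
The plan is to realize $\mathbb{P}(E)$ as the zero locus of a natural regular section of a vector bundle on $\mathbb{P}(F)$ and then apply the Koszul resolution to convert this geometric description into the desired K-theoretic identity.

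First, I would construct the section. The tautological line subbundle on $\mathbb{P}(F)$ fits into the standard sequence
\[
0 \to \mathcal{O}_{\mathbb{P}(F)}(-1) \to p^*F \to Q \to 0,
\]
and composing the inclusion $\mathcal{O}_{\mathbb{P}(F)}(-1) \hookrightarrow p^*F$ with the surjection $p^*F \twoheadrightarrow p^*G$ gives a bundle map $\mathcal{O}_{\mathbb{P}(F)}(-1) \to p^*G$, or equivalently a section
\[
s \in \Gamma\bigl(\mathbb{P}(F),\, p^*G \otimes \mathcal{O}_{\mathbb{P}(F)}(1)\bigr).
\]
Fiberwise, at a point $(x,\ell)$ with $x \in X$ and $\ell \subseteq F_x$ a line, $s$ vanishes precisely when $\ell$ maps to zero in $G_x$, i.e.\ when $\ell \subseteq E_x$. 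Thus the zero scheme of $s$ is set-theoretically $\mathbb{P}(E)$.

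Next I would argue that $s$ is a regular section, so that $\mathbb{P}(E)$ is its scheme-theoretic zero locus. Both $\mathbb{P}(E)$ and $\mathbb{P}(F)$ are smooth projective bundles over $X$, and the codimension of $\mathbb{P}(E) \subseteq \mathbb{P}(F)$ equals $\operatorname{rk}(F) - \operatorname{rk}(E) = \operatorname{rk}(G)$, which matches the rank of the bundle $V := p^*G \otimes \mathcal{O}_{\mathbb{P}(F)}(1)$ of which $s$ is a section. Since the zero locus has the expected codimension on a smooth variety, $s$ is regular.

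Finally, for a regular section of a vector bundle $V$ of rank $g$ on a smooth variety, the Koszul complex
\[
0 \to \Lambda^g V^\vee \to \Lambda^{g-1} V^\vee \to \cdots \to \Lambda^1 V^\vee \to \mathcal{O}_{\mathbb{P}(F)} \to \mathcal{O}_{Z(s)} \to 0
\]
is exact (this is the content of \cite[Appendix B]{Ful:IT} cited in the statement). Taking the alternating sum of classes in $K_T(\mathbb{P}(F))$ gives
\[
[\mathcal{O}_{\mathbb{P}(E)}] = \sum_{i=0}^{g}(-1)^i[\Lambda^i V^\vee] = \lambda_{-1}(V^\vee) = \lambda_{-1}\bigl((\mathcal{O}_{\mathbb{P}(F)}(1)\otimes p^*G)^\vee\bigr),
\]
which is the claim. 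The one step requiring care is verifying that the section is regular in the equivariant setting and that the Koszul complex is $T$-equivariant, but since the inclusion $E \subseteq F$ and the quotient $F \to G$ are $T$-equivariant by hypothesis, the section $s$ is an equivariant map of equivariant bundles and the entire Koszul resolution carries a natural $T$-action, so the identity holds in $K_T(\mathbb{P}(F))$ as stated.
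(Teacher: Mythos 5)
Your proposal is correct and follows essentially the same route as the paper's proof: construct the section $s$ of $\mathcal{O}_{\mathbb{P}(F)}(1)\otimes p^*G$ from the tautological sequence, identify $\mathbb{P}(E)$ as its zero scheme, and take the alternating sum of the Koszul resolution in $K_T(\mathbb{P}(F))$. The one small difference is that for the regularity of $s$ the paper invokes \cite[B.5.6]{Ful:IT} directly (which holds without any smoothness hypothesis, since locally the section becomes part of a coordinate system upon trivializing the bundles), whereas you argue via expected codimension on a smooth variety — this requires $\mathbb{P}(F)$ to be Cohen-Macaulay, which is fine in the application but slightly narrower than the lemma as stated.
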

\begin{proof}
From the tautological sequence on $\mathbb{P}(F)$, there is a vector bundle map $\mathcal{O}_{\mathbb{P}(F)}(-1)\to p^*F$.
When composed with the quotient map $p^*F\to p^*G$, this becomes a map $\mathcal{O}_{\mathbb{P}(F)}(-1)\to p^*G$, and this map is the $0$ map exactly when the fiber of $\mathcal{O}_{\mathbb{P}(F)}(-1)$ is included in the fiber of $p^*E$, which happens when $x\in\mathbb{P}(E)\subseteq\mathbb{P}(F)$.
This map corresponds to a section $s\in\text{Hom}(\mathcal{O}_{\mathbb{P}(F)}(-1),p^*(G))\cong\mathcal{O}_{\mathbb{P}(F)}(1)\otimes p^*G$.
This section is regular and its zero locus is $Z(s)=\mathbb{P}(E)$ \cite[B.5.6]{Ful:IT}.
Then by \cite[B.3.4, (*)]{Ful:IT}, there is an exact sequence \[0\to\Lambda^r(\mathcal{O}_{\mathbb{P}(F)}(1)\otimes p^*G)^\vee\to...\to\Lambda(\mathcal{O}_{\mathbb{P}(F)}(1)\otimes p^*G)^\vee\to\mathcal{O}_{\mathbb{P}(F)}\to\mathcal{O}_{\mathbb{P}(E)}\to0.\]
This implies \[[\mathcal{O}_{\mathbb{P}(E)}]-[\mathcal{O}_{\mathbb{P}(F)}]+\sum_{i=1}^r(-1)^{i-1}[\Lambda^i(\mathcal{O}_{\mathbb{P}(F)}(1)\otimes p^*G)^\vee]=0.\]
Then solving for $[\mathcal{O}_{\mathbb{P}(E)}]$ gives \[\mathcal{O}_{\mathbb{P}(E)}]=\sum_{i=0}^r(-1)^i[\Lambda^i(\mathcal{O}_{\mathbb{P}(F)}(1)\otimes p^*G)^\vee]=\lambda_{-1}((\mathcal{O}_{\mathbb{P}(F)}(1)\otimes p^*G)^\vee).\]
\end{proof}
\begin{lem}\label{Kpf}
For a partition $\lambda=(\lambda_1,...,\lambda_k)$ with $\lambda_1\leq N$, any class $\alpha\in K_0(\mathbb{V}^\lambda)$, and the inclusion map $\iota:\mathbb{V}^\lambda\to\mathbb{V}^\emptyset$, \[[\mathcal{O}_{\mathbb{V}^\lambda}]=\prod_{i=1}^k\prod_{j=k+1-i}^{k-i+\lambda_i}(1-[\mathcal{L}_i\otimes\mathbb{C}_{t_j}^\vee])\in K_T(\mathbb{V}^\emptyset).\]
\end{lem}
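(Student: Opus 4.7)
My plan is to prove this by induction on $n$ for the tower $\mathbb{V}^\lambda_n$, following the same pattern as the proof of Lemma \ref{dubprod} but replacing the top Chern class argument with the K-theoretic formula from Lemma \ref{sub-bundle}. The base case $n=0$ is a point, where the right-hand side is an empty product equal to $1 = [\mathcal{O}_{\mathrm{pt}}]$. For the inductive step, I set $\lambda' = (\lambda_1, \ldots, \lambda_n, 0, \ldots, 0)$ so that $\mathbb{V}^{\lambda'}_n = \mathbb{V}^\lambda_n$, and observe (exactly as in the proof of Lemma \ref{dubprod}) that $\mathbb{V}^\lambda_{n+1} = \mathbb{P}(\mathcal{F}_{N+n+1-\lambda_{n+1}}/\mathcal{S}_n)$ is a projective subbundle of $\mathbb{V}^{\lambda'}_{n+1} = \mathbb{P}(\mathcal{F}_{N+n+1}/\mathcal{S}_n)$ over this common base.

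The first key step is to apply Lemma \ref{sub-bundle} to this pair with $E = \mathcal{F}_{N+n+1-\lambda_{n+1}}/\mathcal{S}_n$, $F = \mathcal{F}_{N+n+1}/\mathcal{S}_n$, and quotient $G = \mathcal{F}_{N+n+1}/\mathcal{F}_{N+n+1-\lambda_{n+1}}$. Using the identification $\mathcal{O}_{\mathbb{V}^{\lambda'}_{n+1}}(-1) = \mathcal{L}_{n+1}$ and the fact that $G$ is equivariantly trivial, splitting under $T$ (with the opposite flag convention $F_i = \langle e_n, \ldots, e_{n+1-i}\rangle$) as $\bigoplus_{j=k-n}^{k-n-1+\lambda_{n+1}} \mathbb{C}_{t_j}$, the multiplicativity of $\lambda_{-1}$ on direct sums gives
\[[\mathcal{O}_{\mathbb{V}^\lambda_{n+1}}] = \prod_{j=k-n}^{k-n-1+\lambda_{n+1}} (1 - [\mathcal{L}_{n+1} \otimes \mathbb{C}_{t_j}^\vee]) \in K_T(\mathbb{V}^{\lambda'}_{n+1}).\]

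Next I would push this class forward into $K_T(\mathbb{V}^\emptyset_{n+1})$ via the inclusion $\iota_2 : \mathbb{V}^{\lambda'}_{n+1} \hookrightarrow \mathbb{V}^\emptyset_{n+1}$. Since each factor $1 - [\mathcal{L}_{n+1} \otimes \mathbb{C}_{t_j}^\vee]$ is the restriction of a class globally defined on $\mathbb{V}^\emptyset_{n+1}$, the projection formula pulls the entire product outside the pushforward, leaving $(\iota_2)_* [\mathcal{O}_{\mathbb{V}^{\lambda'}_{n+1}}]$. To evaluate this I would use flat base change along the Cartesian square whose vertical arrows are the projective bundle projections $\mathbb{V}_{n+1} \to \mathbb{V}_n$ and whose horizontal arrows are $\iota_2$ and the inclusion $\iota : \mathbb{V}^\lambda_n \hookrightarrow \mathbb{V}^\emptyset_n$; this identifies $(\iota_2)_* [\mathcal{O}_{\mathbb{V}^{\lambda'}_{n+1}}]$ with the pullback of $\iota_* [\mathcal{O}_{\mathbb{V}^\lambda_n}]$ along the projective bundle projection. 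The induction hypothesis then rewrites this pullback as $\prod_{i=1}^n \prod_{j=k+1-i}^{k-i+\lambda_i} (1 - [\mathcal{L}_i \otimes \mathbb{C}_{t_j}^\vee])$, and combining with the previous step (reindexing $k-n = k+1-(n+1)$) yields the desired formula at level $n+1$.

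I expect the main obstacle to be purely bookkeeping: verifying that the $T$-character decomposition of the trivial bundle $\mathcal{F}_{N+n+1}/\mathcal{F}_{N+n+1-\lambda_{n+1}}$ matches the stated index range for $j$, and confirming that the tautological line bundle $\mathcal{L}_{n+1}$ on the subbundle $\mathbb{V}^{\lambda'}_{n+1}$ really is the restriction of its ambient counterpart on $\mathbb{V}^\emptyset_{n+1}$, so that the projection formula and base change apply cleanly. Both are routine once the flag conventions are fixed, but they govern whether the telescoping of the induction lands on exactly the product displayed in the lemma.
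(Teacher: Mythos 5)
Your proposal is correct and follows essentially the same route as the paper's proof: the same induction on the tower, the same application of Lemma \ref{sub-bundle} with $E=\mathcal{F}_{N+n+1-\lambda_{n+1}}/\mathcal{S}_n$, $F=\mathcal{F}_{N+n+1}/\mathcal{S}_n$, and $G=\mathcal{F}_{N+n+1}/\mathcal{F}_{N+n+1-\lambda_{n+1}}$, followed by the projection formula and the identification of $(\iota_2)_*[\mathcal{O}_{\mathbb{V}^{\lambda'}_{n+1}}]$ with the pullback of $\iota_*[\mathcal{O}_{\mathbb{V}^{\lambda}_n}]$ along the projective bundle projection. The character bookkeeping you flag as the main obstacle does work out exactly as you state ($F_{N+n+1}/F_{N+n+1-\lambda_{n+1}}=\langle e_{k-n},\ldots,e_{k-n-1+\lambda_{n+1}}\rangle$ under the opposite-flag convention), matching the paper's computation.
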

\begin{proof}
We proceed by induction on $n$ in $\mathbb{V}^\lambda_n$.
The base case, n=0, is a point, which cannot have nonempty, proper subvarieties, so this holds trivially.
Suppose \[[\mathcal{O}_{\mathbb{V}_n^\lambda}]=\prod_{i=1}^n\prod_{j=k+1-i}^{k-i+\lambda_i}(1-[\mathcal{L}_i\otimes\mathbb{C}^\vee_{t_j}])\in K_T(\mathbb{V}^\emptyset_n).\]
Recall $\mathbb{V}^\lambda_n=\{(S_1,...,S_n): \dim(S_i)=i, S_i\subseteq F_{N+i-\lambda_i}\}$, and $\mathbb{V}^\lambda_{n+1}=\mathbb{P}(\mathbb{F}_{N+n+1-\lambda_{n+1}}/(\mathcal{L}_1\oplus...\oplus\mathcal{L}_n))$, where $\mathcal{F}_{N+n+1-\lambda_{n+1}}$ is the vector bundle on $\mathbb{V}^\lambda_n$ whose fiber is $F_{N+n+1-\lambda_{n+1}}$, and $\mathcal{L}_i$ is the vector bundle whose fiber is $S_i/S_{i-1}$.
Then for $\lambda'=(\lambda_1,...,\lambda_n,0)$, we have that $\mathbb{V}^\lambda_{n+1}=\mathbb{P}(\mathcal{F}_{N+n+1-\lambda_{n+1}}/(\mathcal{L}_1\oplus...\oplus\mathcal{L}_n))$ and $\mathbb{V}^{\lambda'}_{n+1}=\mathbb{P}(\mathcal{F}_{N+n+1}/(\mathcal{L}_1\oplus...\oplus\mathcal{L}_n)$.
Since $\mathcal{F}_{N+n+1-\lambda_{n+1}}/(\mathcal{L}_1\oplus...\oplus\mathcal{L}_n)\subseteq\mathcal{F}_{N+n+1}/(\mathcal{L}_1\oplus...\oplus\mathcal{L}_n)$ with quotient $G=\mathcal{F}_{N+n+1}/\mathcal{F}_{N+n+1-\lambda_{n+1}}$, by Lemma \ref{sub-bundle}, we have $[\mathcal{O}_{\mathbb{V}^{\lambda}_{n+1}}]=\lambda_{-1}((G\otimes\mathcal{L}_{n+1}^\vee)^\vee)\in K_T(\mathbb{V}^{\lambda'}_{n+1})$.
Since $F_{N+n+1}/F_{N+n+1-\lambda_{n+1}}=\langle e_{N+k+1-(N+n+1)},...,e_{N+k-(N+n+1-\lambda_{n+1})}\rangle$, this means \[[\mathcal{O}_{\mathbb{V}^\lambda_{n+1}}]=\prod_{j=k-n}^{k-n-1+\lambda_{n+1}}(1-\mathbb{C}_{t_j}^\vee\otimes\mathcal{L}_{n+1}).\]
Then since $\mathbb{V}^\lambda_n\subseteq\mathbb{V}^\emptyset_n$, and both $\mathbb{V}^{\lambda'}_{n+1}$ and $\mathbb{V}^\emptyset_{n+1}$ are projectivizations of the bundle $E=\mathcal{F}_{N+n+1}/(\mathcal{L}_1\oplus...\oplus\mathcal{L}_n)$ over the previous spaces, $\mathbb{V}^{\lambda'}_{n+1}\subseteq\mathbb{V}^\emptyset_{n+1}$:
\[\begin{tikzcd}
\mathbb{V}^{\lambda'}_{n+1}=\mathbb{P}(E) \arrow[d,"p"] \arrow[r,"\iota_2"] & \mathbb{V}^\emptyset_{n+1}=\mathbb{P}(E) \arrow[d,"p"]\\
\mathbb{V}^{\lambda}_n \arrow[r,"\iota_2"] & \mathbb{V}^\emptyset_n
\end{tikzcd}\]
Furthermore, $[\mathbb{V}^{\lambda'}_{n+1}]=p^*([\mathbb{V}^{\lambda}_n])\in K_T(\mathbb{V}^\emptyset_{n+1})$, where $p:\mathbb{V}^\emptyset_{n+1}\to\mathbb{V}^\emptyset_n$ is the projection map.
Now for the inclusion maps $\iota_1:\mathbb{V}^\lambda_{n+1}\to\mathbb{V}^{\lambda'}_{n+1}$ and $\iota_2:\mathbb{V}^{\lambda'}_{n+1}\to\mathbb{V}^\emptyset_{n+1}$, we have $\iota=\iota_2\circ\iota_1$, and so we have \[\begin{split}\iota_*([\mathcal{O}_{\mathbb{V}^\lambda_{n+1}}])& =\iota_{2,*}(\iota_{1,*}([\mathcal{O}_{\mathbb{V}^\lambda_{n+1}}]))\\
& =\iota_{2,*}([\mathcal{O}_{\mathbb{V}^{\lambda'}_{n+1}}]\iota_2^*(\prod_{j=k-n}^{k-n-1+\lambda_{n+1}}(1-\mathbb{C}_{t_j}^\vee\otimes\mathcal{L}_{n+1})))\\
& = \left(\prod_{i=1}^n\prod_{j=k+1-i}^{k-i+\lambda_i}(1-[\mathcal{L}_i\otimes\mathbb{C}^\vee_{t_j}])\in K_T(\mathbb{V}^\emptyset_n)\right)\prod_{j=k-n}^{k-n-1+\lambda_{n+1}}(1-\mathbb{C}_{t_j}^\vee\otimes\mathcal{L}_{n+1})\\
& = \prod_{i=1}^{n+1}\prod_{j=k+1-i}^{k-i+\lambda_i}(1-[\mathcal{L}_i\otimes\mathbb{C}_{t_j}^\vee])\in K_T(\mathbb{V}^\emptyset_{n+1}),\end{split}\]
from the induction hypothesis.
This completes the induction step, and so completes the proof.
\end{proof}

\section{Factorial Grothendieck polynomials}

This section establishes the correspondence between the classes of vector bundles and the variables used in polynomials, uses that correspondence to express the pushforward of any class in terms of polynomials (Lemma \ref{Kpfpoly}), and uses that to get the main result in K-theory (Theorem \ref{factGrothendieck} and Corollary \ref{SRuleK}).

\subsection{Grothendieck polynomials} 
Grothendieck polynomials represent the classes of the structure sheaves of Schubert varieties in flag varieties \cite{LS:flag}. 
They are defined recursively using divided difference operators.
The symmetric group $S_n$ acts on the polynomial ring $\mathbb{Z}[x_1,...,x_n]$ by permuting the variables: $w(f(x_1,...,x_n))=f(x_{w(1)},...,x_{w(n)})$.
With $s_i\in S_n$ being the simple reflection which maps $i$ to $i+1$, $i+1$ to $i$, and fixes all other elements, define the operators $\partial_i,\pi_i$ for $1\leq i\leq n-1$ by \begin{equation}\partial_i(f)=\frac{f-s_i(f)}{x_i-x_{i+1}}, \pi_i(f)=\partial_i((1-x_{i+1})f).\end{equation}
Then for $w_0\in S_n$ being the longest permutation, given by $w_0(i)=n+1-i$ for $1\leq i\leq n$, the Grothendieck polynomial for $w_0$ is $\mathcal{G}_{w_0}(x)=x_1^{n-1}x_2^{n-2}...x_{n-1}^1$.
Then the Grothendieck polynomials for other $w\in S_n$ are defined recursively by $\pi_i\mathcal{G}_{w}(x)=\mathcal{G}_{ws_i}(x)$ for $ws_i<w$ in the Bruhat order.
The equivariant version of these, double Grothendieck polynomials, use two sets of variables, $x_1,...,x_n$ and $T_1,...,T_n$, and start with $\mathcal{G}^T_{w_0}(x,T)=\prod_{i+j\leq n}(x_i+T_j-x_iT_j)$, then are recursively defined with the same operators as before, with the permutations acting trivially on the $T$ variables \cite{LS:Grothendieck,LS:flag}. 
Note that $\mathcal{G}^T_{w}(x,0)=\mathcal{G}_w(x)$ for all $w\in S_n$.

When $w\in S_n$ is $k$-Grassmannian, meaning $w(i)>w(i+1)$ for all $i\neq k$, it corresponds to the partition \[\lambda=(w(k)-k,w(k-1)-k+1,...,w(1)-1),\] and $\mathcal{G}_\lambda^T(x,T)$ is given by \cite{IN:factGrothendieck}: \[\mathcal{G}_\lambda^T(x,T)=\frac{\text{det}((x_i|T)^{\lambda_j+k-j}(1-x_i)^{j-1})}{\prod_{1\leq i<j\leq k}(x_i-x_j)},\] where $(x_i|T)^r=\prod_{j=1}^r(x_i+T_j-x_iT_j)$.
\subsection{Factorial Grothendieck polynomials represent Schubert classes}

To realize the polynomials geometrically, define $x_i$ such that $\lambda_{-1}(\mathcal{S})=\prod_{i=1}^kx_i$, and $T_i$ as $T_i=1-[\mathbb{C}^\vee_{t_i}]$.
In the Bott-Samelson variety, we define $z_i=1-[\mathcal{L}_i]$ for $1\leq i\leq k$.
Then, as a K-theory analog to Lemma \ref{pfpoly}, we have:
\begin{lem}\label{Kpfpoly}
For a polynomial $f(z,T)\in\mathbb{Z}[z_1,...,z_k;T_1,...,T_n]$ for $n=N+k$ with the above definitions of $z_i$ and $T_i$, \[\pi_*(f(z,T))=\sum_{w\in S_k}\left(\prod_{i=1}^{k-1}\prod_{j=i+1}^k\frac{(x_{w(i)}+T_{j-i}-x_{w(i)}T_{j-i})}{1-(1-x_{w(i)})(1-x_{w(j)})^{-1}}\right)f(x_{w(1)},...,x_{w(k)};T).\]
\end{lem}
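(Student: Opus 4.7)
The strategy parallels exactly the way Lemma \ref{pfpoly} was deduced from Lemma \ref{pfloc}. The plan is to localize both sides at an arbitrary $T$-fixed point $e_\mu \in \Gr(k,N+k)^T$, use Lemma \ref{Kpfloc} to evaluate the left-hand side, match it term-by-term with the right-hand side after dictionary translation, and then invoke injectivity of the equivariant K-theoretic localization map (stated in Section 5.2) to conclude equality of the two classes in $K_T(\Gr(k,N+k))$.

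The key translation is between the ``additive'' generators $z_i, T_i, x_i$ used in the polynomial expression and the ``multiplicative'' characters $e^{\pm t_i}$ appearing in Lemma \ref{Kpfloc}. At a fixed point $e_{\lambda,w} \in (\mathbb{V}^\emptyset)^T$ the fiber of $\mathcal{L}_i$ is $\langle e_{i_{w(i)}}\rangle$, so $[\mathcal{L}_i]|_{e_{\lambda,w}} = e^{t_{i_{w(i)}}}$ and therefore $z_i|_{e_{\lambda,w}} = 1 - e^{t_{i_{w(i)}}}$. Similarly $T_j = 1 - e^{-t_j}$, and at $e_\mu \in \Gr(k,N+k)^T$ the unordered multiset $\{x_j|_{e_\mu}\}$ equals $\{1 - e^{t_{i_j}}\}$ since $\lambda_{-1}(\mathcal{S})|_{e_\mu} = \prod_j(1-e^{t_{i_j}})$. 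Thus for any polynomial $f$, the evaluation $f(z_1,\dots,z_k;T)|_{e_{\mu,w}}$ coincides with $f(1-e^{t_{i_{w(1)}}},\dots,1-e^{t_{i_{w(k)}}};1-e^{-t})$, which is precisely the substitution appearing in Lemma \ref{Kpfloc}.

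Next I would carry out the two short algebraic identities that bridge the numerator and denominator factors. Writing $x_{w(i)} = 1-e^{t_{i_{w(i)}}}$ and $T_{j-i} = 1-e^{-t_{j-i}}$, a direct expansion gives
\[
x_{w(i)} + T_{j-i} - x_{w(i)}T_{j-i} \;=\; 1 - e^{t_{i_{w(i)}} - t_{j-i}},
\]
matching the numerator in Lemma \ref{Kpfloc}. Likewise, using $(1-x_{w(j)})^{-1} = e^{-t_{i_{w(j)}}}$,
\[
1 - (1-x_{w(i)})(1-x_{w(j)})^{-1} \;=\; 1 - e^{t_{i_{w(i)}} - t_{i_{w(j)}}},
\]
matching the denominator. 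Summing over $w \in S_k$ and comparing with the formula from Lemma \ref{Kpfloc} shows that the localizations of the two sides at $e_\mu$ agree.

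The main obstacle, and the only point requiring care beyond routine checking, is that the claimed right-hand side a priori lives in a localization of $K_T(\Gr(k,N+k))$ due to the factor $(1-x_{w(j)})^{-1}$. One way to handle this is to observe that the same sum appears as the localization of a genuine pushforward class, so the identity (which is what we need) holds as an equality of classes in $K_T(\Gr(k,N+k))$ once we know the left-hand side is a class and the localization map extends injectively to the appropriate localized rings; alternatively one can mimic the post-Lemma \ref{pfpoly} skew-symmetrization argument to see directly that the sum clears denominators. Either way, combined with injectivity of $\iota^*\colon K_T(\Gr) \hookrightarrow \oplus_{e_\mu} K_T(e_\mu)$, the equality of all localizations yields the lemma.
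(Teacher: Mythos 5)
Your proposal is correct and follows essentially the same route as the paper's proof: localize at each fixed point, translate $z_i$, $T_j$, $x_j$ into the exponential characters, verify the two algebraic identities matching numerator and denominator with Lemma \ref{Kpfloc}, and conclude by injectivity of the localization map. Your extra remark about clearing the denominators $(1-x_{w(j)})^{-1}$ is a reasonable point of care that the paper leaves implicit.
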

\begin{proof}
Since $\mathcal{S}|_{e_\lambda}=\langle e_{i_1},...,e_{i_k}\rangle$ for $i_j=j+\lambda_{k+1-j}$ for $1\leq j\leq k$, we have that \[\prod_{j=1}^kx_j|_{e_\lambda}\lambda_{-1}(\mathcal{S})|_{e_\lambda}=\prod_{j=1}^k(1-e^{-t_{i_j}}).\]
As a result, we can say $x_j|_{e_\lambda}=1-e^{t_{i_j}}$.
Now since $T_j=1-[\mathbb{C}^\vee_{t_j}]$, we have $T_j|_{e_\lambda}=1-e^{-t_j}$.
With this, \[\begin{split}(x_{w(i)}+T_{j-i}- & x_{w(i)}T_{j-i})|_{e_\lambda}=(1-e^{t_{i_{w(i)}}})+(1-e^{-t_{j-i}})-(1-e^{t_{i_{w(i)}}})(1-e^{-t_{j-i}})\\ & =1+1-1-e^{t_{i_{w(i)}}}-e^{-t_{j-i}}+e^{t_{i_{w(i)}}}+e^{-t_{j-i}}-e^{t_{i_{w(i)}}}e^{-t_{j-i}}\\ & =1-e^{t_{i_{w(i)}}-t_{j-i}}.\end{split}\]
So then the localization of the right side of the equation at a fixed point $e_\lambda$ is given by \[\begin{split}& \sum_{w\in S_k}\left(\prod_{i=1}^{k-1}\prod_{j=i+1}^k\frac{(x_{w(i)}+T_{j-i}-x_{w(i)}T_{j-i})}{1-(1-x_{w(i)})(1-x_{w(j)})^{-1}}\right)f(x_{w(1)},...,x_{w(k)};T)|_{e_\lambda}\\ & =\sum_{w\in S_k}\left(\prod_{i=1}^{k-1}\prod_{j=i+1}^k\frac{(1-e^{t_{i_{w(i)}}-t_{j-i}})}{(1-e^{t_{i_{w(i)}}-t_{i_{w(j)}}})}\right)f(1-e^{t_{i_{w(1)}}},...,1-e^{t_{i_{w(k)}}};1-e^{-t})\end{split}\]
Using the identification $z_i=1-[\mathcal{L}_i]$, we have that $f(z,T)=f(1-[\mathcal{L}_1],...,1-[\mathcal{L}_k],1-e^{-t})$.
As a result, we can apply Lemma \ref{Kpfloc} to the left side of the equation and obtain \[\pi_*(f(z,T))|_{e_\lambda}=\sum_{w\in S_k}\left(\prod_{i=1}^{k-1}\prod_{j=i+1}^k\frac{(1-e^{t_{i_{w(i)}}-t_{j-i}})}{(1-e^{t_{i_{w(i)}}-t_{i_{w(j)}}})}\right)f(1-e^{t_{i_{w(1)}}},...,1-e^{t_{i_{w(k)}}};1-e^{-t}).\]
So then the localizations match at every fixed point and so by the injectivity of the localization map, the two classes are the same.
\end{proof}

\begin{remark}
This pushforward can also be expressed in terms of Demazure operators. See Proposition \ref{pushforwardoperator}.
\end{remark}

Since the factorial Grothendieck polynomials represent the classes of the structure sheaves of the corresponding Schubert varieties, we can recover the factorial Grothendieck polynomials as the pushforward of $\mathbb{V}^\emptyset$.
\begin{thm}\label{factGrothendieck}
For a partition $\lambda=(\lambda_1,...,\lambda_k)$ with corresponding k-Grassmannian permutation $w$ given by $w(i)=i+\lambda_{k+1-i}$ for $1\leq i\leq k$ and the remaining values arranged in ascending order,
\[[\mathcal{O}_{\Omega^\lambda}]=\pi_*([\mathcal{O}_{\mathbb{V}^\lambda}])=\mathcal{G}_w^T(x;T).\]
\end{thm}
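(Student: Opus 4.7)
The plan is to follow the same arc as the cohomological proof of Theorem~\ref{factSchur}, but in equivariant K-theory. First, since $\pi:\mathbb{V}^\lambda\to\Omega^\lambda$ is a resolution of singularities and $\Omega^\lambda$ has rational singularities (a standard fact for Schubert varieties in Grassmannians), we have $\pi_*([\mathcal{O}_{\mathbb{V}^\lambda}])=[\mathcal{O}_{\Omega^\lambda}]$. So it suffices to show $\pi_*([\mathcal{O}_{\mathbb{V}^\lambda}])=\mathcal{G}_w^T(x,T)$. By Lemma~\ref{Kpf}, writing the product in terms of the variables $z_i=1-[\mathcal{L}_i]$ and $T_j=1-[\mathbb{C}_{t_j}^\vee]$, we have $[\mathcal{O}_{\mathbb{V}^\lambda}]=P_\lambda(z,T)=\prod_{i=1}^k\prod_{j=k+1-i}^{k-i+\lambda_i}(z_i+T_j-z_iT_j)$. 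Applying Lemma~\ref{Kpfpoly} to $f=P_\lambda(z,T)$ then gives an explicit formula that I will manipulate to match the determinantal form of $\mathcal{G}_\lambda^T(x,T)$.

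Second, I will combine the two nested products. Under the substitution $z_i\mapsto x_{w(i)}$, the class $P_\lambda$ becomes $\prod_{i=1}^k\prod_{j=k+1-i}^{k-i+\lambda_i}(x_{w(i)}+T_j-x_{w(i)}T_j)$. Multiplying by the numerator prefactor $\prod_{i=1}^{k-1}\prod_{j=i+1}^k(x_{w(i)}+T_{j-i}-x_{w(i)}T_{j-i})$ from Lemma~\ref{Kpfpoly} and re-indexing $j\to j-i$ in the prefactor (exactly as in the proof of Theorem~\ref{factSchur}) merges the two into a single product $\prod_{i=1}^k\prod_{j=1}^{k-i+\lambda_i}(x_{w(i)}+T_j-x_{w(i)}T_j)$, which is precisely $\prod_{i=1}^k (x_{w(i)}|T)^{\lambda_i+k-i}$.

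Third, I will analyze the denominator factor of Lemma~\ref{Kpfpoly}. Using the identity
\[
1-(1-x_{w(i)})(1-x_{w(j)})^{-1}=\frac{x_{w(i)}-x_{w(j)}}{1-x_{w(j)}},
\]
the reciprocals collect into $\prod_{i<j}\frac{1-x_{w(j)}}{x_{w(i)}-x_{w(j)}}$. Counting multiplicities, $(1-x_{w(l)})$ appears with exponent $l-1$, giving $\prod_{i=1}^k(1-x_{w(i)})^{i-1}$. Factoring $\prod_{i<j}(x_{w(i)}-x_{w(j)})=\mathrm{sgn}(w)\prod_{i<j}(x_i-x_j)$ to the outside yields
\[
\pi_*([\mathcal{O}_{\mathbb{V}^\lambda}])=\frac{1}{\prod_{i<j}(x_i-x_j)}\sum_{w\in S_k}\mathrm{sgn}(w)\prod_{i=1}^k(x_{w(i)}|T)^{\lambda_i+k-i}(1-x_{w(i)})^{i-1},
\]
and the sum is exactly the Leibniz expansion of $\det\bigl((x_i|T)^{\lambda_j+k-j}(1-x_i)^{j-1}\bigr)$, which is $\mathcal{G}_\lambda^T(x,T)=\mathcal{G}_w^T(x,T)$ by the Ikeda--Naruse formula recalled in Section~7.1.

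The routine step is pattern-matching with the cohomological case, and the only real subtlety is verifying the denominator manipulation: unlike in cohomology, the extra factor $(1-x_{w(j)})$ produced by the K-theoretic Euler class does not cancel but instead rearranges into the $(1-x_i)^{j-1}$ column factors of Ikeda--Naruse's determinant. Once one sees that this combinatorial reindexing works out with the correct exponents $i-1$ on $(1-x_{w(i)})$, the signs from $\mathrm{sgn}(w)$ fall into place and the determinantal expansion is immediate.
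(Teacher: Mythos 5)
Your proposal is correct and follows essentially the same line of reasoning as the paper's own proof: invoke rational singularities to reduce to computing $\pi_*([\mathcal{O}_{\mathbb{V}^\lambda}])$, substitute Lemma~\ref{Kpf} into Lemma~\ref{Kpfpoly}, rewrite $1-(1-x_{w(i)})(1-x_{w(j)})^{-1}$ as $(x_{w(i)}-x_{w(j)})/(1-x_{w(j)})$, merge the two nested products into $(x_{w(i)}|T)^{\lambda_i+k-i}$, collect $(1-x_{w(i)})^{i-1}$ by counting multiplicities, extract $\mathrm{sgn}(w)$ from the Vandermonde, and recognize the Leibniz expansion of the Ikeda--Naruse determinant. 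All the identities you use match those in the paper, and the argument is complete.
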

\begin{proof}
Because $\Omega^\lambda$ has only rational singularities and $\pi:\mathbb{V}^\lambda\to\Omega^\lambda$ is a desingularization, we have that $\pi_*([\mathcal{O}_{\mathbb{V}^\lambda}])=[\mathcal{O}_{\Omega^\lambda}]$.
Using Lemma \ref{Kpf}, since $1-[\mathcal{L}_i\otimes\mathbb{C}^\vee_{t_j}]=1-(1-x_i)(1-T_j)=x_i+T_j-x_iT_j$, we have \[\pi_*([\mathcal{O}_{\mathbb{V}^\lambda}])=\pi_*(\prod_{i=1}^k\prod_{k+1-i}^{k-i+\lambda_i}(x_i+T_j-x_iT_j)).\]
Then applying Lemma \ref{Kpfpoly}, we have \[\begin{split}& \pi_*([\mathcal{O}_{\mathbb{V}^\lambda}])\\ & =\sum_{w\in S_k}\left(\prod_{i=1}^{k-1}\prod_{j=i+1}^k\frac{(x_{w(i)}+T_{j-i}-x_{w(i)}T_{j-i})}{1-(1-x_{w(i)})(1-x_{w(j)})^{-1}}\right)\left(\prod_{i=1}^k\prod_{j=k+1-i}^{k-i+\lambda_i}(x_{w(i)}+T_j-x_{w(i)}T_j)\right)\end{split}.\]
Making the simplification $\frac{1}{1-(1-x_{w(i)})(1-x_{w(j)})^{-1}}=\frac{(1-x_{w(j)})}{(1-x_{w(j)})-(1-x_{w(i)})}=\frac{1-x_{w(j)}}{x_{w(i)}-x_{w(j)}}$ yields \[\begin{split}\pi_*([\mathcal{O}_{\mathbb{V}^\lambda}])& =\\ & \sum_{w\in S_k}\left(\prod_{i=1}^{k-1}\prod_{j=i+1}^k\frac{(1-x_{w(j)})(x_{w(i)}+T_{j-i}-x_{w(i)}T_{j-i})}{x_{w(i)}-x_{w(j)}}\right)\\ & \hspace{8mm}\left(\prod_{i=1}^k\prod_{j=k+1-i}^{k-i+\lambda_i}(x_{w(i)}+T_j-x_{w(i)}T_j)\right)\end{split}.\]
Reindexing the products to put the $(x_{w(i)}+T_j-x_{w(i)}T_j)$ terms together, we have \[\begin{split}& \pi_*([\mathcal{O}_{\mathbb{V}^\lambda}])\\ & =\sum_{w\in S_k}\left(\prod_{i=1}^{k-1}\prod_{j=i+1}^k\frac{1-x_{w(j)}}{x_{w(i)}-x_{w(j)}}\right)\left(\prod_{i=1}^k\prod_{j=1}^{k-i+\lambda_i}(x_{w(i)}+T_j-x_{w(i)}T_j)\right).\end{split}\]
From here we make several simplifications, starting with \[\prod_{i=1}^{k-1}\prod_{j=i+1}^k\frac{1}{x_{w(i)}-x_{w(j)}}=\text{sgn}(w)\prod_{1\leq i<j\leq k}\frac{1}{x_i-x_j}.\]
Further, \[\begin{split}\prod_{i=1}^{k-1}\prod_{j=i+1}^k(1-x_{w(j)}) & =\prod_{1\leq i<j\leq k}(1-x_{w(j)})\\ & =\prod_{j=2}^k(1-x_{w(j)})^{j-1}\\ & =\prod_{i=1}^k(1-x_{w(i)})^{i-1}.\end{split}\]
Lastly, \[\prod_{i=1}^k\prod_{j=1}^{k-i+\lambda_i}(x_{w(i)}+T_j-x_{w(i)}T_j)=\prod_{i=1}^k(x_{w(i)}|T)^{k-i+\lambda_i}.\]
Putting all of these together gives \[\pi_*([\mathcal{O}_{\mathbb{V}^\lambda}])=\left(\prod_{1\leq i<j\leq k}\frac{1}{x_i-x_j}\right)\sum_{w\in S_k}\text{sgn}(w)\prod_{i=1}^k(x_{w(i)}|T)^{k-i+\lambda_i}(1-x_{w(i)})^{i-1}.\]
Since $\text{det}(a_{i,j})=\sum_{w\in S_k}\text{sgn}(w)\prod_{i=1}^ka_{w(i),i}$, this becomes \[\pi_*([\mathcal{O}_{\mathbb{V}^\lambda}])=\frac{\text{det}((x_i|T)^{\lambda_j+k-j}(1-x_i)^{j-1})}{\prod_{1\leq i<j\leq k}(x_i-x_j)},\]
which matches the factorial Grothendieck polynomial.
\end{proof}

This allows us to deal with $P_\lambda(z,T)$ when $\lambda$ is a partition, but if $\lambda$ is a composition, there is a straightening rule to express the pushforward as a combination of pushforwards of $P_\mu(z,T)$ for partitions $\mu$

\begin{cor}\label{SRuleK}
For any composition $\lambda=(\lambda_1,...,\lambda_k)$, let \[P_\lambda(z,T)=\prod_{i=1}^k\prod_{j=k+1-i}^{k-i+\lambda_i}(z_i+T_j-z_iT_j).\]
Then if $\lambda$ is a partition with corresponding $k$-Grassmannian permutation $w$, $\pi_*(P_\lambda(z,T))=\mathcal{G}_w^T(x,T)$.
If $\lambda$ is not a partition, then \[\pi_*(P_\lambda(z,T))=\sum_{j=\lambda_i+1}^{\lambda_{i+1}}\frac{1-T_{j+k-i}}{1-T_{\lambda_{i+1}+k-i}}\pi_*(P_{\mu^{(j)}}(z,T))-\sum_{j=\lambda_i+1}^{\lambda_{i+1}-1}\frac{1-T_{j+k-i}}{1-T_{\lambda_{i+1}+k-i}}\pi_*(P_{\nu^{(j)}}(z,T))\] in $K_T(\Gr(k,n))$, where $\mu^{(j)}=(\lambda_1,...,\lambda_{i+1},j,...,\lambda_k)$ and $\nu^{(j)}=(\lambda_1,...,\lambda_{i+1}-1,j,...,\lambda_k)$ for any $1\leq i\leq k$.
\end{cor}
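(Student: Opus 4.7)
For the first claim, when $\lambda$ is a partition the identification $\pi_*(P_\lambda) = \mathcal{G}_w^T(x,T)$ is exactly Theorem \ref{factGrothendieck}. The plan for the second claim is to reduce it to a pure determinantal identity, exploiting the observation that the determinantal expression derived in the proof of Theorem \ref{factGrothendieck} is actually valid for any composition $\lambda$, not just for partitions. Concretely, applying Lemma \ref{Kpfpoly} to $P_\lambda(z,T)$ and performing exactly the same algebraic simplifications as in the proof of Theorem \ref{factGrothendieck} yields
$$\pi_*(P_\lambda(z,T)) = \frac{\det\bigl((x_i|T)^{\lambda_j+k-j}(1-x_i)^{j-1}\bigr)_{1\leq i,j\leq k}}{\prod_{1\leq i<j\leq k}(x_i-x_j)}$$
for every composition $\lambda$, since at no point in that computation was the partition hypothesis used. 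I would open the proof by recording this observation.

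Having reduced to a determinantal identity, the next step is to notice that the matrices attached to $\lambda$, $\mu^{(j)}$, and $\nu^{(j)}$ agree in every column other than $i$ and $i+1$, so by multilinearity of the determinant in its columns it is enough to verify a single identity in the two affected columns. Writing $a = \lambda_i+k-i$ and $b = \lambda_{i+1}+k-i-1$, column $i$ of $M_\lambda$ is $(x|T)^a(1-x)^{i-1}$ and column $i+1$ is $(x|T)^b(1-x)^i = (1-x)\cdot(x|T)^b(1-x)^{i-1}$. For $\mu^{(j)}$ (respectively $\nu^{(j)}$), column $i$ is upgraded to $(x|T)^{b+1}(1-x)^{i-1}$ (respectively $(x|T)^b(1-x)^{i-1}$), while column $i+1$ becomes $(x|T)^{j+k-i-1}(1-x)^i$ with exponent ranging over $a,\ldots,b$ (respectively $a,\ldots,b-1$).

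The key polynomial identity, from which the whole claim follows by linear algebra, is the recursion
$$(1-x)\,(x|T)^m = \frac{(x|T)^m - (x|T)^{m+1}}{1-T_{m+1}},$$
which is immediate from $(x|T)^{m+1} = (x|T)^m\cdot\bigl(1-(1-x)(1-T_{m+1})\bigr)$. I would apply this recursion to the stray $(1-x)$ factor inside column $i+1$ and iterate it from $m=b$ downward to $m=a$; the telescoping naturally splits into two kinds of contributions, those with exponent $b+1$ in column $i$ (giving the $\mu^{(j)}$ sum with coefficients $\tfrac{1-T_{j+k-i}}{1-T_{\lambda_{i+1}+k-i}}$) and those with exponent $b$ in column $i$ (giving the $\nu^{(j)}$ sum). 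The different truncations in the two sums fall out because the ``top'' $\nu$-term, which would correspond to $j = \lambda_{i+1}$, produces a pair of columns identical to those in the $\mu^{(j)}$ expansion and therefore must be absent.

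The main obstacle I anticipate is the bookkeeping of signs, coefficients, and summation ranges in this telescoping; in particular, verifying that the $\nu^{(j)}$ sum truncates one index earlier than the $\mu^{(j)}$ sum. If the direct calculation becomes unwieldy one may cite the equivariant straightening identity proved combinatorially by Gourbounov and Korff \cite[Corollary 2.4]{GK:Grothendieck}, which is exactly the required determinantal identity.
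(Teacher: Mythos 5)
Your proposal is correct and ends up at the same place as the paper's proof --- a direct verification of the determinantal identity, localized by multilinearity to columns $i$ and $i+1$ --- but it organizes the computation differently. The paper proves a single adjacent-swap recursion, $\pi_*(P_{(a,b)})=\pi_*(P_{(a+1,b)})+\frac{1-T_{a+2}}{1-T_{b+1}}\pi_*(P_{(b,a+1)})-\frac{1-T_{a+2}}{1-T_{b+1}}\pi_*(P_{(b-1,a+1)})$, by brute-force expansion of the $2\times 2$ determinants using $x=(x+T_j-xT_j)-T_j(1-x)$, and then obtains the stated multi-term formula ``by successively applying this.'' Your telescoping gives the full formula in one pass: writing $C_r=(x|T)^r(1-x)^{i-1}$ and $D_m=(x|T)^m(1-x)^{i}$ for the two column types, your recursion reads $D_m=\frac{C_m-C_{m+1}}{1-T_{m+1}}$ entrywise, whence $C_a=C_{b+1}+\sum_{m=a}^{b}(1-T_{m+1})D_m=C_b+\sum_{m=a}^{b-1}(1-T_{m+1})D_m$; substituting $D_b=\frac{C_b-C_{b+1}}{1-T_{b+1}}$ into $C_a\wedge D_b$ and expanding $C_a$ against $C_b$ and $C_{b+1}$ (the terms $C_b\wedge C_b$ and $C_{b+1}\wedge C_{b+1}$ vanishing) yields exactly the two sums with the correct coefficients $\frac{1-T_{m+1}}{1-T_{b+1}}$ and the correct ranges. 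This is arguably cleaner than the paper's iterated swap and makes the asymmetric truncation transparent. One caveat: your heuristic for \emph{why} the $\nu$-sum stops at $j=\lambda_{i+1}-1$ is not right --- the would-be $j=\lambda_{i+1}$ term has columns $C_b$ and $D_b=(1-x)(x|T)^b(1-x)^{i-1}$, which are neither identical nor proportional, so that determinant does not vanish and is not a duplicate of a $\mu$-term; the truncation really comes from the telescoping $C_a\to C_b$ ending at $m=b-1$ together with $C_b\wedge C_b=0$. Finally, your opening observation that the determinantal formula for $\pi_*(P_\lambda)$ holds for arbitrary compositions is correct and is needed (and used silently by the paper when it applies the formula to non-partition pairs such as $(b-1,a+1)$), so recording it explicitly is an improvement.
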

\begin{proof}
The claim about partitions is Theorem \ref{factGrothendieck}.
The other claim boils down to a direct computation with the determinental formulae.
For simplicity, we will look at the computation for partitions with two parts (since the statement only involves switching two parts, the rest of it stays constant).
It centers around the identity \[\begin{split}& \pi_*(P_{(a,b)}(z,T))\\ & =\pi_*(P_{(a+1,b)}(z,T))+\frac{1-T_{a+2}}{1-T_{b+1}}\pi_*(P_{(b,a+1)}(z,T))-\frac{1-T_{a+2}}{1-T_{b+1}}\pi_*(P_{(b-1,a+1)}(z,T)).\end{split}\]
The indices on the $T$ variables here depend on the power $(x|T)^{\lambda_i+k-i}$ in the determinant, and in this simplified case we have $k=2$ and $i=1$. Therefore when applying this to the general situation, it would need to be $T_{a+1+k-i}$ and $T_{b+k-i}$ instead of $T_{a+2}$ and $T_{b+1}$.
By successively applying this, the result is obtained.
The claim is equivalent to \[\begin{split}& (1-T_{b+1})\pi_*(P_{(a,b)}(z,T))+(1-T_{a+2})\pi_*(P_{(b-1,a+1)}(z,T))\\ & =(1-T_{b+1})\pi_*(P_{(a+1,b)})+(1-T_{a+2})\pi_*(P_{(b,a+1)}(z,T)).\end{split}\]
From Theorem \ref{factGrothendieck}, we have $\pi_*(P_{(\lambda_1,\lambda_2)}(z,T))=\frac{det((x_i|T)^{\lambda_j+2-j}(1-x_i)^{j-1})}{x_1-x_2}$, and so by multiplying through by the common denominator the identity is equivalent to \[\begin{split}& (1-T_{b+1})((x_1|T)^{a+1}(x_2|T)^b(1-x_2)-(x_1|T)^b(x_2|T)^{a+1}(1-x_1))\\ & +(1-T_{a+2})((x_1|T)^b(x_2|T)^{a+1}(1-x_2)-(x_1|T)^{a+1}(x_2|T)^b(1-x_1))\\ & =(1-T_{b+1})((x_1|T)^{a+2}(x_2|T)^b(1-x_2)-(x_1|T)^b(x_2|T)^{a+2}(1-x_1))\\ & +(1-T_{a+2})((x_1|T)^{b+1}(x_2|T)^{a+1}(1-x_2)-(x_1|T)^{a+1}(x_2|T)^{b+1}(1-x_1)).\end{split}\]
For notational simplicity, we will use $G_T(a,b)=(x_1|T)^{a+1}(x_2|T)^b(1-x_2)-(x_1|T)^b(x_2|T)^{a+1}(1-x_1)$, which makes the left side of the equation \[\begin{split}& (1-T_{b+1})G_T(a,b)+(1-T_{a+2})G_T(b-1,a+1)\\ & =G_T(a,b)-T_{b+1}G_T(a,b)+G_T(b-1,a+1)-T_{a+2}G_T(b-1,a+1).\end{split}\]
Expanding yields \[\begin{split}& (x_1|T)^{a+1}(x_2|T)^b-x_2(x_1|T)^{a+1}(x_2|T)^b\\ &-(x_1|T)^b(x_2|T)^{a+1}+x_1(x_1|T)^b(x_2|T)^{a+1}-T_{b+1}G_T(a,b)\\ & +(x_1|T)^b(x_2|T)^{a+1}-x_2(x_1|T)^b(x_2|T)^{a+1}\\ & -(x_1|T)^{a+1}(x_2|T)^b+x_1(x_1|T)^{a+1}(x_2|T)^b-T_{a+2}G_T(b-1,a+1).\end{split}\]
Now since $x_i=(x_i+T_j-x_iT_j)-T_j(1-x_i)$, after some cancellations this simplifies to \[\begin{split}& -(x_1|T)^{a+1}(x_2|T)^{b+1}+T_{b+1}(x_1|T)^{a+1}(x_2|T)^b(1-x_2)\\ & +(x_1|T)^{b+1}(x_2|T)^{a+1}-T_{b+1}(x_1|T)^b(x_2|T)^{a+1}(1-x_1)-T_{b+1}G_T(a,b)\\ & -(x_1|T)^b(x_2|T)^{a+2}+T_{a+2}(x_1|T)^b(x_2|T)^{a+1}(1-x_2)\\ & +(x_1|T)^{a+2}(x_2|T)^b-T_{a+2}(x_1|T)^{a+1}(x_2|T)^b(1-x_1)-T_{a+2}G_T(b-1,a+1)\end{split}\]
Then we have $T_{b+1}G_T(a,b)=T_{b+1}((x_1|T)^{a+1}(x_2|T)^b(1-x_2)-(x_1|T)^b(x_2|T)^{a+1}(1-x_1))$ and $T_{a+2}G_T(b-1,a+1)=T_{a+2}((x_1|T)^b(x_2|T)^{a+1}(1-x_2)-(x_1|T)^{a+1}(x_2|T)^b(1-x_1))$, so these terms cancel and the left side of the equation becomes \[-(x_1|T)^{a+1}(x_2|T)^{b+1}+(x_1|T)^{b+1}(x_2|T)^{a+1}-(x_1|T)^b(x_2|T)^{a+2}+(x_1|T)^{a+2}(x_2|T)^b.\]
A similar process on the other side gives that the right side of the claim is also equal to this.
So then the claim holds, and as a result the proof is complete.
\end{proof}

\appendix
\section{An alternate interpretation of the pushforward formula}

Lemmas \ref{pfpoly} and \ref{Kpfpoly} can be expressed in terms of the divided difference operators from subsection 7.1.
In particular, for a permutation $w\in S_k$, there exists a reduced decomposition $w=s_{i_1}...s_{i_\ell}$, where $\ell$ is the number of inversions in $w$.
Then define operators $\partial_w=\partial_{i_1}...\partial_{i_\ell}$ and $\pi_w=\pi_{i_1}...\pi_{i_\ell}$, with $\partial_i$ and $\pi_i$ as defined in equation (1) in subsection 7.1.
These do not depend on the decomposition because the operators satisfy the same braid relations that the simple reflections do.
To prove that the pushforward can be expressed in terms of these operators, we recall some Lemmas regarding the combinatorics of the operators.

\begin{lem}
Let $f,g\in\mathbb{Z}[x_1,...,x_k]$.
\begin{enumerate}

\item (Leibniz rule) For any $1\leq i\leq k-1$, $\partial_i(fg)=f\partial_i(g)+s_i(g)\partial_i(f)$.

\item (Symmetry for $\partial$) For any $1\leq i\leq k-1$, if $f$ is symmetric with respect to $x_i$ and $x_{i+1}$, meaning $s_i(f)=f$, then $\partial_i(f)=0$. As  result, if $f$ is symmetric, then $\partial_i(fg)=f\partial_i(g)$.

\item (Symmetry for $\pi$) For any $1\leq k-1$, $\pi_i(f)=f+(1-x_i)\partial_i(f)$. As a result, if $f$ is symmetric, then $\pi_i(fg)=f\pi_i(g)$.

\end{enumerate}
\end{lem}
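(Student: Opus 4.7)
My approach is direct computation from the definitions of $\partial_i$ and $\pi_i$ above; no outside input is needed, so the task reduces to organizing three bookkeeping calculations so that parts~(2) and~(3) can quote part~(1). The logical order I would follow is: prove~(1) first, deduce~(2) as a corollary, then establish the identity in~(3), from which the symmetric-factor consequence in~(3) drops out.

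For part~(1), I would start from $\partial_i(fg) = (fg - s_i(f)\,s_i(g))/(x_i - x_{i+1})$, insert $\pm f\,s_i(g)$ in the numerator, and regroup to obtain $f\,\partial_i(g) + s_i(g)\,\partial_i(f)$. Part~(2) is then immediate: if $s_i(f) = f$, the numerator of $\partial_i(f)$ vanishes identically, so $\partial_i(f) = 0$, and the consequence $\partial_i(fg) = f\,\partial_i(g)$ follows by substituting $\partial_i(f) = 0$ into~(1). For part~(3), the identity $\pi_i(f) = f + (1-x_i)\,\partial_i(f)$ is proved by expanding $\pi_i(f) = \partial_i(f) - \partial_i(x_{i+1}f)$ and applying Leibniz to $\partial_i(x_{i+1}f)$, using the one-line facts $\partial_i(x_{i+1}) = -1$ and $s_i(x_{i+1}) = x_i$. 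The second consequence $\pi_i(fg) = f\,\pi_i(g)$ for $s_i$-invariant $f$ then follows in a single step: by the identity and~(1),
\[\pi_i(fg) = fg + (1-x_i)\bigl(f\,\partial_i(g) + s_i(g)\,\partial_i(f)\bigr),\]
and the last term vanishes by~(2), leaving $f\bigl(g + (1-x_i)\,\partial_i(g)\bigr) = f\,\pi_i(g)$.

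The only potential snag is that substituting the Leibniz rule exactly as stated in~(1) into the computation of $\pi_i(f)$ yields a formula involving $s_i(f)$ rather than $f$. To land on the clean form $f + (1-x_i)\,\partial_i(f)$, one should instead use the equivalent mirror version $\partial_i(gh) = (\partial_i g)\,h + (s_i g)(\partial_i h)$, which follows from~(1) applied to $gh = hg$ (or from the same add-and-subtract argument with $\pm s_i(f)\,g$ in the numerator). This is purely cosmetic and no real obstacle arises; the entire proof is routine.
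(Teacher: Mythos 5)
Your proof is correct. The paper states this lemma as a recollection of standard facts and gives no proof of its own, so there is nothing to diverge from; your direct computation from the definitions is exactly the expected argument, and you correctly flagged and resolved the one genuine subtlety, namely that the mirror form $\partial_i(gh)=h\,\partial_i(g)+s_i(g)\,\partial_i(h)$ applied to $x_{i+1}f$ (using $\partial_i(x_{i+1})=-1$ and $s_i(x_{i+1})=x_i$) is what produces the stated $f+(1-x_i)\partial_i(f)$ rather than $s_i(f)+(1-x_{i+1})\partial_i(f)$.
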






There is an operator known as the Jacobi symmetrizer, e.g. \cite[page 41]{FP:degenloci}, defined as \[\partial(f)=\sum_{w\in S_k}w\left(\frac{f}{\prod_{1\leq i<j\leq k}(x_i-x_j)}\right).\]
This operator can be expressed in terms of the divided difference operators $\partial_i$ as well.
The following result is known and can be proved through the Demazure character formula, but we have provided a direct combinatorial proof.

\begin{lem}\label{operator}
For any $f\in\mathbb{Z}[x_1,...,x_k]$ with $w_0\in S_k$ being the longest permutation given by $w_0(i)=k+1-i$ for $1\leq i\leq k$, \[\partial_{w_0}(f)=\sum_{w\in S_k}w\left(\frac{f}{\prod_{1\leq i<j\leq k}(x_i-x_j)}\right)\ \text{and}\ \pi_{w_0}(f)=\sum_{w\in S_k}w\left(f\prod_{1\leq i<j\leq k}\frac{1-x_j}{x_i-x_j}\right).\]
\end{lem}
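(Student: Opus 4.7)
Both identities reduce to the first, so I focus there. For the first identity, I would verify that both $\partial_{w_0}$ and $A(f) := \sum_{w \in S_k} w(f/\Delta) = \mathcal{A}(f)/\Delta$ (where $\Delta := \prod_{1 \leq i < j \leq k}(x_i - x_j)$ and $\mathcal{A} := \sum_w \mathrm{sgn}(w)\, w$) are $\mathbb{Z}[x_1,\ldots,x_k]^{S_k}$-linear maps whose image lies in the symmetric subring. For $\partial_{w_0}$, $\mathbb{Z}[x]^{S_k}$-linearity is iterated Lemma~(2); symmetry of the image follows from writing $\partial_{w_0} = \partial_i \partial_{s_i w_0}$ (every $s_i$ is a left descent of $w_0$), so that $\partial_{w_0}(f)$ is $s_i$-invariant for all $i$. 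For $A$ both properties are immediate.

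Since $\mathbb{Z}[x_1,\ldots,x_k]$ is a free $\mathbb{Z}[x]^{S_k}$-module of rank $k!$ with basis the staircase monomials $\{x^\mu : 0 \leq \mu_i \leq k-i\}$ (see \cite{Mac:SF}), it suffices to check agreement on this basis. On the top monomial $x^\delta$ with $\delta = (k-1,k-2,\ldots,0)$, both sides equal $1$: the left by a standard iterated divided difference computation, the right since $\mathcal{A}(x^\delta) = \Delta$ is the Vandermonde determinant. For any other staircase $\mu$, $|\mu| < \binom{k}{2} = \ell(w_0)$; then $\partial_{w_0}(x^\mu)$ is homogeneous of strictly negative degree, and $\mathcal{A}(x^\mu)$ is antisymmetric of degree less than $\deg \Delta$, so both vanish.

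For the second identity, setting $\rho := \prod_{j=2}^k(1-x_j)^{j-1} = \prod_{i<j}(1-x_j)$, the right-hand side rewrites as $A(f\rho) = \partial_{w_0}(f\rho)$ by the first identity. So it suffices to prove $\pi_{w_0}(f) = \partial_{w_0}(f\rho)$, which I would do by induction on $k$ using the factorization $w_0^{(k)} = c \cdot w_0^{(k-1)}$ where $c = s_{k-1}s_{k-2}\cdots s_1$ and $w_0^{(k-1)}$ is the longest element of $S_{k-1}$ acting on $x_2,\ldots,x_k$ (fixing $x_1$); the lengths add so $\pi_{w_0^{(k)}} = \pi_c\, \pi_{w_0^{(k-1)}}$ and $\partial_{w_0^{(k)}} = \partial_c\, \partial_{w_0^{(k-1)}}$. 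A short induction along the reduced word for $c$, using that $\partial_i$ commutes with multiplication by $(1-x_j)$ when $j \neq i, i+1$, yields
\[
\pi_c(g) = \partial_c\bigl((1-x_2)(1-x_3)\cdots(1-x_k)\,g\bigr).
\]

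The main obstacle---and the step that closes the induction---is the observation that $(1-x_2)(1-x_3)\cdots(1-x_k)$ is symmetric in $x_2,\ldots,x_k$ and therefore $S_{k-1}$-invariant, so by iterated Lemma~(2) it commutes past $\partial_{w_0^{(k-1)}}$. Setting $\tilde\rho_{k-1} := \prod_{j=3}^k(1-x_j)^{j-2}$ (the analog of $\rho$ for $S_{k-1}$ on $x_2,\ldots,x_k$), one checks directly $\rho_k = \tilde\rho_{k-1}\cdot(1-x_2)\cdots(1-x_k)$. Combining the induction hypothesis $\pi_{w_0^{(k-1)}}(h) = \partial_{w_0^{(k-1)}}(h\tilde\rho_{k-1})$ with the displayed formula for $\pi_c$, the composition collapses to $\partial_c\partial_{w_0^{(k-1)}}(f\rho_k) = \partial_{w_0^{(k)}}(f\rho_k)$, completing the induction.
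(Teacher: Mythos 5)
Your proposal is correct, and it takes a genuinely different route from the paper's proof. The paper proves the $\partial_{w_0}$ identity by a direct double induction: it writes $w_0^{(p+1)}=s_1\cdots s_p\,w_0^{(p)}$ and then computes $\partial_{q-1}\partial_q\cdots\partial_p\,\partial_{w_0^{(p)}}(f)$ term by term via explicit partial-fraction manipulations, keeping track of which factors are $s_{q-1}$-symmetric at each stage; the $\pi_{w_0}$ case is then dismissed as ``very similar.'' You instead observe that both $\partial_{w_0}$ and the Jacobi symmetrizer are $\mathbb{Z}[x]^{S_k}$-linear, so by freeness of $\mathbb{Z}[x_1,\dots,x_k]$ over the invariants with the sub-staircase monomial basis, agreement reduces to the basis, where it follows from degree considerations plus the two standard evaluations $\partial_{w_0}(x^\delta)=1$ and $\mathcal{A}(x^\delta)=\Delta$. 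Your treatment of the $\pi_{w_0}$ identity is the more substantive divergence and, in my view, an improvement on the paper's hand-wave: the intermediate identity $\pi_{w_0}(f)=\partial_{w_0}(f\rho)$ with $\rho=\prod_{i<j}(1-x_j)$, proved by the parabolic factorization $w_0=c\cdot w_0^{(k-1)}$ together with the commutation of $\partial_i$ past $(1-x_j)$ for $j\neq i,i+1$, makes the second formula an immediate corollary of the first. The trade-off is that your argument leans on two imported standard facts (the freeness statement and $\partial_{w_0}(x^\delta)=1$), whereas the paper's computation is self-contained; if you wanted yours equally self-contained you would need to supply the short induction for $\partial_{w_0}(x^\delta)=1$, but everything you assert is true and the logic closes.
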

\begin{proof}
We give the proof for the claim on $\partial_{w_0}$. The proof for $\pi_{w_0}$ is very similar, using the same techniques.

Let $w_0^{(p)}\in S_k$ denote the permutation given by $w_0^{(p)}(i)=p+1-i$ for $1\leq i\leq p$ and $w_0^{(p)}(i)=i$ for $p+1\leq i\leq k$.
We proceed by induction on $p$ in $w_0^{(p)}$.
The base case is $p=2$, since $w_0^{(1)}$ is the identity permutation.
By embedding $S_i$ into $S_k$ by $S_i=\langle s_1,...,s_{i-1}\rangle$ for $2\leq i\leq k$, the base case requires that $\partial_1(f)=\sum_{w\in S_2}w\left(\frac{f}{x_1-x_2}\right)$.
Since $S_2=\{1,s_1\}$, this is equivalent to \[\partial_1(f)=\frac{f}{x_1-x_2}+s_1\left(\frac{f}{x_1-x_2}\right)=\frac{f}{x_1-x_2}+\frac{s_1(f)}{x_2-x_1}=\frac{f-s_1(f)}{x_1-x_2},\] which is the definition of $\partial_1(f)$, so the base case holds.
Later on, we will want to use this to say that $\partial_i(f)=\frac{f}{x_i-x_{i+1}}+s_i\left(\frac{f}{x_i-x_{i+1}}\right)$ as an alternate definition.

For the induction step, we use the fact that $w_0^{(p+1)}=s_1...s_pw_0^{(p)}$.
As a result, $\partial_{w_0^{(p+1)}}=\partial_1...\partial_p\partial_{w_0^{(p)}}$.
We assume \[\partial_{w_0^{(p)}}(f)=\sum_{w\in S_p}w\left(\frac{f}{\prod_{1\leq i<j\leq p}(x_i-x_j)}\right)\] as the induction hypothesis.
For brevity's sake, define $g=\partial_{w_0^{(p)}}(f)$ and note that $g$ is symmetric in the variables $x_1,...,x_p$, due to it being the sum over $S_p$.

To complete the step, we use descending induction on $\partial_q...\partial_p\partial_{w_0^{(p)}}g$ to reach $\partial_1...\partial_p\partial_{w_0^{(p)}}g$.
We start with the base case of \[\partial_p\partial_{w_0^{(p)}}(f)=\frac{g}{x_p-x_{p+1}}+s_p\left(\frac{g}{x_p-x_{p+1}}\right).\]
For the induction step we assume \[\partial_q...\partial_pg=\sum_{i=q}^{p+1}s_i...s_p\left(\frac{g}{\prod_{j=q}^p(x_j-x_{p+1})}\right),\] where for $i=p+1$ the permutation $s_i...s_p$ is the identity.
Then we apply $\delta_{q-1}$ to this.
To do so, note that $s_{q+1}...s_pg$ is symmetric with respect to $x_1,...,x_q$, since $s_{q+1}$ through $s_p$ only affect $x_{q+1}$ through $x_{p+1}$ and $g$ is symmetric with respect to $x_1,...,x_p$ in the first place.
Similarly $\prod_{j=q+1}^n\frac{1}{x_j-x_{p+1}}$ does not involve the variables $x_1,...,x_q$ and so is symmetric with respect to those variables, except when acted upon by $s_q$.
In particular, the only term in the product $\prod_{j=q}^p\frac{1}{x_j-x_{p+1}}$, in all of these terms in the sum except the $i=q$ term, that isn't symmetric with respect to $x_{q-1}$ and $x_q$ is the $\frac{1}{x_q-x_{p+1}}$ term.
Then \[\begin{split}\partial_{q-1} & \left(s_i...s_p\left(\frac{g}{\prod_{j=q}^p(x_j-x_{n+1})}\right)\right)\\ & =s_i...s_p\left(\frac{g}{\prod_{j=q+1}^p(x_j-x_{n+1})}\right)\partial_{q-1}s_i...s_p\left(\frac{1}{x_q-x_{n+1}}\right)\end{split}\] for $q+1\leq i\leq p+1$.
Now we calculate \[\begin{split}\partial_{q-1}s_i...s_p\left(\frac{1}{x_q-x_{n+1}}\right) & =\partial_{q-1}\left(\frac{1}{x_q-x_i}\right)\\ & =\frac{1}{x_{q-1}-x_q}\left(\frac{1}{x_q-x_i}-s_{q-1}\left(\frac{1}{x_q-x_i}\right)\right)\\ & =\frac{1}{x_{q-1}-x_q}\left(\frac{(x_{q-1}-x_i)-(x_q-x_i)}{(x_q-x_i)(x_{q-1}-x_i)}\right)\\ & =\frac{1}{(x_q-x_i)(x_{q-1}-x_i)}\\ & =s_i...s_p\left(\frac{1}{(x_q-x_{p+1})(x_{q-1}-x_{p+1})}\right).\end{split}\]
With this, we have \[\partial_{q-1}\left(s_i...s_p\left(\frac{g}{\prod_{j=q}^p(x_j-x_{p+1})}\right)\right)=s_i...s_p\left(\frac{g}{\prod_{j=q-1}^p(x_j-x_{p+1})}\right)\] for $q+1\leq i\leq p+1$.
The only term left to calculate, then, is \[\begin{split}\partial_{q-1}s_q...s_p\left(\frac{g}{\prod_{j=q}^p(x_j-x_{n+1})}\right) & =\frac{1}{x_{q-1}-x_q}s_q...s_p\left(\frac{g}{\prod_{j=q}^p(x_j-x_{p+1})}\right)\\ & +s_{q-1}\left(\frac{1}{x_{q-1}-x_q}s_q...s_p\left(\frac{g}{\prod_{j=q}^p(x_j-x_{p+1})}\right)\right).\end{split}\]
For this, we use the fact that $\frac{1}{x_{q-1}-x_q}=s_q...s_p\left(\frac{1}{x_{q-1}-x_{p+1}}\right)$ to get that \[\begin{split}\partial_{q-1}s_q...s_p\left(\frac{g}{\prod_{j=q}^p(x_j-x_{p+1})}\right) & =s_q...s_p\left(\frac{g}{\prod_{j=q-1}^p(x_j-x_{p+1})}\right)\\ & +s_{q-1}s_q...s_p\left(\frac{g}{\prod_{j=q-1}^p(x_j-x_{p+1})}\right).\end{split}\]
Combining these results yields \[\partial_{q-1}...\partial_pg=\sum_{i=q-1}^{p+1}s_i...s_p\left(\frac{g}{\prod_{j=q-1}^p(x_j-x_{p+1})}\right).\]
This completes the induction step, which proves \[\partial_{w_0^{(p+1)}}=\partial_1...\partial_pg=\sum_{i=1}^{p+1}s_i...s_p\left(\frac{g}{\prod_{j=1}^p(x_j-x_{p+1})}\right).\]
Then plugging in the induction hypothesis for $g$ gives \[\partial_{w_0^{(p+1)}}=\sum_{w\in S_p}\sum_{q=1}^{p+1}s_q...s_pw\left(\frac{f}{\prod_{1\leq i<j\leq p+1}(x_i-x_j)}\right).\]
Now since $s_q...s_pw$ ranges over $S_{p+1}$ as $w$ ranges over $S_p$ and $1\leq q\leq p+1$, this is equivalent to \[\partial_{w_0^{(p+1)}}=\sum_{w\in S_{p+1}}w\left(\frac{f}{\prod_{1\leq i<j\leq p+1}(x_i-x_j)}\right).\]
This completes the induction step, which completes the proof.
\end{proof}

Recalling that $\partial_i$ and $\pi_i$ extend linearly over $\mathbb{Z}[t_1,...,t_{N+k}]$ and applying this to Lemmas \ref{pfpoly} and \ref{Kpfpoly}, we have:

\begin{prop}\label{pushforwardoperator}
With the definitions from section 3, for $f\in\mathbb{Z}[z_1,...,z_k][t_1,...,t_{N+k}]$, we have \[\pi_*(f(z;t))=\partial_{w_0}(p_\delta f(x;t)),\] where $w_0\in S_k$ is the longest permutation and $p_\delta=\prod_{i=1}^{k-1}\prod_{j=1}^{k-i}(x_i+t_j)$.

With the definitions from section 7, for $f\in\mathbb{Z}[z_1,...,z_k][T_1,...,T_{N+k}]$, we have \[\pi_*(f(z;T))=\pi_{w_0}(P_\delta f(x;T)),\] where $P_\delta=\prod_{i=1}^{k-1}\prod_{j=1}^{k-i}(x_i+T_j-x_iT_j)$.
\end{prop}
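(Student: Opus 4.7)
The plan is to apply Lemma \ref{operator} to rewrite $\partial_{w_0}$ and $\pi_{w_0}$ as explicit Jacobi-type symmetrizers, and then match the resulting expressions term-by-term with the pushforward formulas from Lemmas \ref{pfpoly} and \ref{Kpfpoly}. Since the combinatorial identification between divided-difference operators and the symmetrizing sum has already been established in Lemma \ref{operator}, the proof of the proposition reduces to a bookkeeping exercise verifying that the two sums agree monomial-by-monomial.

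For the cohomological statement, I would first apply Lemma \ref{operator} to get
\[\partial_{w_0}(p_\delta f(x;t))=\sum_{w\in S_k}w\!\left(\frac{p_\delta\, f(x;t)}{\prod_{1\le i<j\le k}(x_i-x_j)}\right)=\sum_{w\in S_k}\frac{\prod_{i=1}^{k-1}\prod_{j=1}^{k-i}(x_{w(i)}+t_j)}{\prod_{1\le i<j\le k}(x_{w(i)}-x_{w(j)})}\, f(x_{w(1)},\dots,x_{w(k)};t),\]
using that $t_j$ is $T$-equivariant so is fixed by $w$. Then I would reindex the inner products in Lemma \ref{pfpoly} via $j\mapsto j-i$, which turns $\prod_{j=i+1}^k(t_{j-i}+x_{w(i)})$ into $\prod_{j=1}^{k-i}(x_{w(i)}+t_j)$, and observe that $\prod_{i=1}^{k-1}\prod_{j=i+1}^k(x_{w(i)}-x_{w(j)})=\prod_{1\le i<j\le k}(x_{w(i)}-x_{w(j)})$. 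The two sums then coincide summand-by-summand.

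For the K-theoretic statement, the analogous computation uses the second half of Lemma \ref{operator}:
\[\pi_{w_0}(P_\delta f(x;T))=\sum_{w\in S_k}\left(\prod_{i=1}^{k-1}\prod_{j=1}^{k-i}(x_{w(i)}+T_j-x_{w(i)}T_j)\right)f(x_{w(1)},\dots,x_{w(k)};T)\prod_{1\le i<j\le k}\frac{1-x_{w(j)}}{x_{w(i)}-x_{w(j)}}.\]
To match this with Lemma \ref{Kpfpoly}, I would use the identity $1-(1-x)(1-y)^{-1}=(x-y)/(1-y)$, so that $\frac{1}{1-(1-x_{w(i)})(1-x_{w(j)})^{-1}}=\frac{1-x_{w(j)}}{x_{w(i)}-x_{w(j)}}$, and then again perform the reindexing $j\mapsto j-i$ in the inner product to bring the $T$-indices in line. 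Once more the two expressions agree summand-by-summand.

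There is no serious obstacle remaining: the substantive combinatorial work is carried out in Lemma \ref{operator}, and once that is in hand the proposition is essentially a direct substitution. The only point requiring a small amount of care is the K-theoretic rewriting of the denominator factor $1-(1-x_{w(i)})(1-x_{w(j)})^{-1}$ and the reindexing conventions in the double products, but both are mechanical manipulations.
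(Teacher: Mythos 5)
Your proposal is correct and follows the paper's approach exactly: the paper's proof is the one-liner ``Apply Lemma \ref{operator} to Lemma \ref{pfpoly} and Lemma \ref{Kpfpoly},'' and you have simply written out the bookkeeping (the reindexing $j\mapsto j-i$, the identity $1-(1-x)(1-y)^{-1}=(x-y)/(1-y)$, and the observation that $t_j$ and $T_j$ are fixed by $w$) that the paper leaves implicit.
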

\begin{proof}
Apply Lemma \ref{operator} to Lemma \ref{pfpoly} and Lemma \ref{Kpfpoly}.
\end{proof}

\bibliographystyle{halpha}
\bibliography{Bott_Samelson_Factorial_Schur_Grothendieck}

\end{document}